\newtheorem{prop}{Proposition}
\newtheorem{theo}[prop]{Theorem}
\newtheorem*{theo*}{Theorem}
\newtheorem{lemm}[prop]{Lemma}
\newtheorem{coro}[prop]{Corollary}
\newtheorem{rema}[prop]{Remark}
\theoremstyle{definition}
\newcommand{\Rmnum}[1]{\expandafter\@slowromancap\romannumeral #1@}
\newcommand{\RR}{\mathbf{R}}
\newcommand{\cA}{\mathcal A}
\newcommand{\bR}{\mathbb{R}}
\newcommand{\bS}{\mathbb{S}}
\DeclareMathOperator{\dis}{dist}
\DeclareMathOperator{\tr}{tr}
\DeclareMathOperator{\Vol}{Vol}
\DeclareMathOperator{\Ric}{Ric}
\DeclareMathOperator{\biRic}{biRic}
\DeclareMathOperator{\Div}{div}
\DeclareMathOperator{\secondfund}{II}
\DeclareMathOperator{\Rm}{Rm}
\newcommand{\bangle}[1]{\left\langle #1 \right\rangle}
\title[stable anisotropic minimal hypersurfaces]{Stable anisotropic minimal hypersurfaces \\ in $\bR^{5}$ and $\bR^{6}$}
\thanks{C.X. is supported by NSFC (Grant No. 12271449, 12126102) and the Natural Science Foundation of Fujian Province of China (Grant No. 2024J011008).}
\author{Jia Li}
\address{School of Mathematical Sciences\\
		Xiamen University\\
		361005, Xiamen, P.R. China}
\email{lijiamath@stu.xmu.edu.cn}
\thanks{Key words and phrases: Anisotropic minimal hypersurface, stable  minimal hypersurface, Bernstein's problem, $\mu$-bubble.}
\author{Chao Xia}
\address{School of Mathematical Sciences\\
		Xiamen University\\
		361005, Xiamen, P.R. China}
\email{chaoxia@xmu.edu.cn}
\begin{document}

\begin{abstract}
In this paper, we prove that a complete, two-sided, stable anisotropic minimal immersed  hypersurface in $\bR^{5}$ or $\bR^{6}$ is flat, provided the anisotropic area functional is $C^4$-close to the area functional.
\end{abstract}
\maketitle

\section{Introduction}
The classical Bernstein problem asks whether a minimal graph in $\bR^{n+1}$ is flat. The picture is nowadays complete that it has been answered affirmatively for $n\le 7$ by Bernstein \cite{Bernstein1927}, Fleming \cite{Fleming1962}, Almgren \cite{Almgren1966}, de Giorgi \cite{De Giorgi1965} and Simons \cite{Sim68} and negatively for $n\ge 8$ by Bombieri-de Giorgi-Giusti \cite{BDE}. The stable Bernstein problem asks whether a complete stable minimal hypersurface in $\bR^{n+1}$ is flat. It has been solved for $n=2$ by do Carmo-Peng and Fischer-Colbrie-Schoen and Pogorelov, in \cite{do Carmo-Peng1979,FCS80,Pogorelov1981} respectively, and for $n\le 6$ by Schoen-Simon-Yau \cite{Schoen-Simon-Yau} under the Euclidean volume growth assumption (see also Bellettini \cite{Bel23}).
Quite recently, there are important progress for the stable Bernstein problem. Precisely, it has been first solved by Chodosh-Li \cite{Chodosh-Li} for $n=3$, and two different proofs have been given by Chodosh-Li \cite{Chodosh-Li-anisotropic} and Catino-Mastrolia-Roncoroni \cite{Catino-Mastrolia-Roncoroni}. In particular,  Chodosh-Li's proof \cite{Chodosh-Li-anisotropic}, which is based on $\mu$-bubble method, yields the intrinsic Euclidean volume growth for a complete stable minimal hypersurface. This method has been successfully applied to solve stable Bernstein problem in $\bR^5$ by Chodosh-Li-Minter-Stryker \cite{Chodosh-Li-Minter-Stryker-5bernstein} and in $\bR^6$ by Mazet \cite{Mazet-6Bernstein}. In view of Bombieri-de Giorgi-Giusti's counterexample in $\bR^8$. The only remaining unsolved case is stable Bernstein problem in $\bR^7$.

For a two-sided immersion $M^n\to \bR^{n+1}$ with chosen unit normal vector field $\nu$, we consider the anisotropic area functional or parametric elliptic integral
$$\mathcal{A}_F(M):=\int_{M}F(\nu)d\mu,$$
where $F:\bR^{n+1}\setminus\{0\}\to(0,\infty)$ is a positive $C^3$ function which is $1$-homogeneous, i.e. $F(tz)=t F(z)$ for $t>0$,
and is elliptic which means $D^2F(z)$ is a positive definite endomorphism for all $z\in\mathbb{S}^n$. We say $M$ is $F$-minimal if $\frac{d}{dt}|_{t=0}\mathcal{A}_F(M_{t})=0$ for all compactly supported variations of $M$  and that $M$ is stable if in addition $\frac{d^2}{dt^2}|_{t=0}\mathcal{A}_F(M_{t})\geq 0$ for such variations. In particular, when $F(\nu)=|\nu|$, $\mathcal{A}_F$ reduces to the classical area functional, and the $F$-minimal and the stable $F$-minimal  hypersurfaces are classical minimal and stable minimal hypersurfaces, respectively.
The $\mathcal{A}_F$ functionals have recently attracted significant attention
due to their practical applications and theoretical significance.

The (stable) anisotropic Bernstein problem asks the similar questions about flatness of $F$-minimal graphs or stable $F$-minimal hypersurfaces. The anisotropic Bernstein problem for graph cases has been resolved in the affirmative by Jenkins \cite{Jenkins1961} in $\bR^3$ and Simon \cite{Simon1977} in $\bR^4$. The stable anisotropic Bernstein problem has been resolved in the affirmative in $\bR^3$ assuming quadratic area growth by White \cite{White91}.
However, there are non-flat $\mathcal{A}_F$-minimizers in $\bR^{n+1}, n\geq 3$ by Mooney \cite{Mooney} and Mooney-Yang \cite{Mooney-Yang}.
On the other hand, under the assumption of sufficient closeness of $\mathcal{A}_F$ to area, the stable anisotropic Bernstein problem has been resolved in the affirmative in $\bR^3$ by Lin \cite{F.H-Lin} (see also Colding-Minicozzi \cite{Colding-Minicozzi02}) and in $\bR^4$ by Chodosh-Li \cite{Chodosh-Li-anisotropic}, as well as in $\bR^{n+1}, 2\le n\le 5$ by Winklmann \cite{Winklmann} under the additional assumption of Euclidean volume growth following the seminal work of Schoen-Simon-Yau \cite{Schoen-Simon-Yau}.
In particular, Chodosh-Li \cite{Chodosh-Li-anisotropic} used the $\mu$-bubble method to get the Euclidean volume growth for stable $F$-minimal hypersurfaces in $\bR^4$. We would like to mention that the $\mu$-bubble method has been initiated by Gromov \cite{Gromov2019Fourlecture} for problems involving scalar curvature and has many applications.
\begin{theo}[\cite{Chodosh-Li-anisotropic}]
Assume that $F$ satisfies
  \begin{align*}
 |\xi|^2\leq D^2F(z)(\xi,\xi)\leq \sqrt{2}|\xi|^2,
 \end{align*}
 for all $\xi\in z^{\perp}$.
 Let $M^3\to\bR^{4}$ be a complete, two-sided, simply-connected, stable $F$-minimal immersion. Then there is some constant $C(F)$ such that $${\rm Vol}(B_R(p))\leq C(F)R^3,$$ where  $B_R(p)$ denotes the geodesic ball of radius $R$ centered at $p\in M$.\par
\end{theo}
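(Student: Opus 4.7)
The plan is to argue by contradiction: assume there exists a sequence of radii $R_{i}\to\infty$ along which $\mathrm{Vol}(B_{R_{i}}(p))/R_{i}^{3}\to\infty$, and derive a contradiction by running an anisotropic $\mu$-bubble construction in a suitable annular region.

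First, I would perform a coarea-type selection in the spirit of Gromov and Chodosh-Li. Super-Euclidean volume growth guarantees (via a standard pigeonhole argument on dyadic annuli) the existence of concentric geodesic balls $B_{r_{i}}(p)\subset B_{\rho_{i}}(p)\subset B_{R_{i}}(p)$ with $\rho_{i}-r_{i}\to\infty$ while $\mathrm{Area}(\partial B_{r_{i}})$ and $\mathrm{Area}(\partial B_{\rho_{i}})$ remain small compared to the enclosed volume. This allows me to pick a smooth function $h_{i}$ on the annular region $U_{i}:=B_{\rho_{i}}(p)\setminus\overline{B_{r_{i}}(p)}$ that blows up to $+\infty$ near $\partial B_{r_{i}}$ and to $-\infty$ near $\partial B_{\rho_{i}}$, with $|\nabla h_{i}|\leq h_{i}^{2}+o(1)$, calibrated so that the functional
\[
\mathcal{E}_{i}(\Omega)\;:=\;\int_{\partial^{*}\Omega\cap U_{i}}F(\nu)\,d\mu\;-\;\int_{\Omega}h_{i}\,dV
\]
admits a minimizer strictly separated from $\partial U_{i}$.

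Second, I would appeal to the anisotropic regularity theory (the hypothesis $I\leq D^{2}F\leq\sqrt{2}\,I$ on $z^{\perp}$ keeps the integrand strictly elliptic and $C^{3}$, so the Almgren--Schoen--Simon / De~Philippis--Maggi type regularity applies in ambient dimension $3$) to conclude that the reduced boundary of the minimizer is a smooth stable $F$-prescribed-mean-curvature surface $\Sigma_{i}\subset U_{i}$. Since $M$ is simply-connected and $\Sigma_{i}$ is separating and two-sided, each component of $\Sigma_{i}$ is a $2$-sphere. Moreover, $\Sigma_{i}$ carries the anisotropic second-variation inequality: for all $\varphi\in C^{1}_{c}(\Sigma_{i})$,
\[
\int_{\Sigma_{i}}\bigl(D^{2}F(\nu)(\nabla\varphi,\nabla\varphi)\bigr)\,d\mu\;\geq\;\int_{\Sigma_{i}}\bigl(Q_{F}(A)+h_{i}'+\tfrac12 h_{i}^{2}\bigr)\varphi^{2}\,d\mu,
\]
where $Q_{F}(A)$ is the anisotropic analogue of $|A|^{2}$ and $h_{i}'$ denotes the normal derivative of $h_{i}$.

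Third, I would test this inequality with $\varphi\equiv 1$ (legal on the closed $2$-sphere), combine it with the Gauss equation on $\Sigma_{i}$, and invoke Gauss--Bonnet $\int_{\Sigma_{i}}K=4\pi\chi(\Sigma_{i})=8\pi$ on each spherical component. The closeness $I\leq D^{2}F\leq\sqrt{2}I$ plays the same role as Chodosh--Li's one-sided eigenvalue gap: it lets me compare $Q_{F}(A)$ to $|A|^{2}$ up to a factor close enough to $1$ that the Gauss-equation rewriting, together with the pointwise estimate $h_{i}'+\tfrac12 h_{i}^{2}\geq $ (positive constant), overwhelms the anisotropic error. This forces a uniform estimate
\[
\mathrm{diam}_{M}(\Sigma_{i})\;\leq\;C(F),
\]
independent of $i$. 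The existence of $\Sigma_{i}$ in $U_{i}$ with $\rho_{i}-r_{i}\to\infty$ forces $\mathrm{diam}_{M}(\Sigma_{i})\to\infty$ (by the warping properties of $h_{i}$, any $\mu$-bubble separates $\partial B_{r_{i}}$ from $\partial B_{\rho_{i}}$), contradicting the bound and finishing the proof.

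The principal obstacle is Step three: extracting a genuinely dimension-independent diameter bound from the anisotropic stability inequality. In the isotropic case $F=|z|$ one simply plugs $\varphi=1$ and uses Gauss--Bonnet to bound $\int |A|^{2}$ and hence (via the prescribed-mean-curvature equation and the bound on $h_{i}$) the area and diameter. In the anisotropic setting the intrinsic Gauss curvature and the $D^{2}F$-weighted second fundamental form are different tensors, so the cross-terms must be controlled by the $C^{4}$-closeness of $F$ to the Euclidean norm; the constraint $D^{2}F\leq\sqrt{2}I$ is precisely what keeps the quadratic form produced by this rearrangement non-negative. Verifying that the anisotropic regularity at the $\mu$-bubble minimizer and the anisotropic Simons-type identity both survive under this closeness is the bulk of the technical work.
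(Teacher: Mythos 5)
Your proposal has a genuine structural gap at its core: the mechanism by which the $F$-stability of $M$ reaches the $\mu$-bubble is never established, and the one you assert is not correct. You take $\mu$-bubbles inside $M$ for the functional $\int_{\partial^*\Omega}F(\nu)\,d\mu-\int_\Omega h_i$ and then claim $\Sigma_i$ ``carries the anisotropic second-variation inequality'' with the weighted second fundamental form $Q_F(A)$ of $M$. This conflates two different objects: $A$ and $D^2F(\nu)$ live on the hypersurface $M\subset\bR^4$, while the second variation of your functional at $\Sigma_i\subset M$ involves the second fundamental form of $\Sigma_i$ in $M$ and $\Ric_M$ — the stability of $M$ in $\bR^4$ does not descend to surfaces inside $M$ in this way. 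In the actual argument (Chodosh--Li, and the scheme carried out in this paper for $n=4,5$) the anisotropic stability enters quite differently: one first uses the pinching to control $H$ by $|A|$ and to get $\int|\nabla f|^2\ge\frac1{\sqrt2}\int|A|^2f^2$, then performs the Gulliver--Lawson conformal change $\tilde g=r^{-2}g$ on $M\setminus\{p\}$, converts stability (via the Gauss equation and the conformal identities) into a spectral positivity condition for a curvature quantity of $\tilde g$, takes a positive solution $w$ of the associated Schr\"odinger equation, and only then runs a \emph{warped, isotropic} $\mu$-bubble for the weighted functional $\int_{\partial^*\Omega}w^k-\int(\chi_\Omega-\chi_{\Omega_0})hw^k$ in $(N,\tilde g)$; the weight $w^k$ in the second variation is exactly what transmits the stability of $M$ to the bubble, and Gauss--Bonnet on the two-dimensional bubble then gives the $\tilde g$-area bound. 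Your proposal contains no substitute for the conformal deformation or the weight $w$, so the key estimate in your Step three has nothing to run on. Two further points fail as stated: separating two-sided surfaces in a simply connected $3$-manifold need not be spheres (separating genus-$g$ surfaces exist in $\bR^3$), so the sphericity must come out of the stability inequality rather than topology; and a contradiction argument along a sequence $R_i$ with super-Euclidean growth cannot produce the quantitative conclusion $\Vol(B_R(p))\le C(F)R^3$ with an explicit constant depending only on $F$.

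You are also missing the closing step that makes the volume bound come out even once a good separating surface is found. The actual proof is direct: one-endedness of $M$ (the Cao--Shen--Zhu-type argument, which in the anisotropic case needs the improved Kato/Bochner estimate together with the pinched stability inequality) ensures the bubble $\Sigma$ bounds a compact region $M_1\supset B_R(p)$ with connected boundary; the $\tilde g$-area bound converts to $\Area_g(\Sigma)\le CR^{2}$ because $r\le e^{C}R$ on $\Sigma$; and then the isoperimetric inequality coming from the \emph{first} variation of $\cA_F$ (Proposition \ref{iso-perimetric}) gives $\Vol_g(M_1)\le C(F)R\,\Area_g(\Sigma)\le C(F)R^{3}$. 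None of these ingredients (one-endedness, the $r$-to-$R$ comparison furnished by working in the conformal metric, the first-variation isoperimetric inequality) appear in your outline, and without them a diameter or area bound on $\Sigma_i$ does not control $\Vol(B_{R_i}(p))$.
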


Recall that the stable Bernstein problem in $\bR^{n+1}, n=4, 5$ has been solved by Chodosh-Li-Minter-Stryker \cite{Chodosh-Li-Minter-Stryker-5bernstein} and Mazet \cite{Mazet-6Bernstein}, again using Chodosh-Li's \cite{general soap bubbles} $\mu$-bubble method and some other new ingredients. It is natural to ask whether Chodosh-Li's result in the anisotropic case is true for $\bR^{n+1}, n=4,5$. This is the main aim for this paper.
\begin{theo}\label{main-theo}
   Let $n=4,5$ and assume that $F$ satisfies
  \begin{align}\label{cond-1}
 |\xi|^2\leq D^2F(z)(\xi,\xi)\leq (1+\epsilon_{n})|\xi|^2,
 \end{align}
 for all $\xi\in z^{\perp}$, with $\epsilon_{4}=\frac{3}{20}$ and $\epsilon_{5}=\frac{1}{1000}$.
 Let $M^n\to\bR^{n+1}$ be a complete, two-sided, simply-connected, stable $F$-minimal immersion.
  Then there is some constant $C(F)$ such that $${\rm Vol}(B_R(p))\leq C(F)R^n.$$
\end{theo}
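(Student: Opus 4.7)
The plan is to adapt the $\mu$-bubble scheme of Chodosh-Li-Minter-Stryker (in dimension $n=4$) and of Mazet (in dimension $n=5$) from the isotropic setting to the anisotropic one, following the model already set up by Chodosh-Li in dimension $n=3$. The argument is by contradiction: assuming there is no constant $C(F)$ for which $\mathrm{Vol}(B_R(p))\le C(F)R^n$, we aim to construct geometric objects inside $M$ whose existence is incompatible with the anisotropic stability inequality once $F$ is sufficiently close to the area integrand.

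First I would record the two analytic inputs. The anisotropic stability inequality, for compactly supported $\phi$, has the schematic shape
\[
\int_M D^2F(\nu)(\nabla \phi,\nabla \phi)\,d\mu \;\ge\; \int_M |A^F|^2\,\phi^2\,d\mu,
\]
where $A^F$ is the anisotropic second fundamental form and $D^2F(\nu)$ is an operator satisfying the pinching \eqref{cond-1}. The isotropic Simons inequality has a natural anisotropic analogue, with error terms of size $O(\epsilon_n)|A|^2$ coming from $D^2F-\mathrm{Id}$ and its derivatives. The preparatory step is to write down both statements in a clean form that makes all anisotropy error terms explicit, so that the remainder of the argument reduces to the isotropic estimate plus controlled perturbations.

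With this in hand, I would set up a warped $\mu$-bubble in an annular region $B_{R}(p)\setminus B_{r}(p)$ of $M$, using a profile function $h=h(t)$ along the distance to $p$ adapted to the dimension. The contradiction hypothesis on volume growth forces $h$ to be chosen so that the associated $\mu$-bubble $\Sigma$ exists in the annulus with good diameter bounds; its first- and second-variation inequalities give $\Sigma$ a weighted stability that, combined with the anisotropic stability of $M$, controls the intrinsic geometry of $\Sigma$. Simply-connectedness of $M$ is used to control the topology of $\Sigma$ (one obtains that $\Sigma$ is simply-connected when $n=4$, and for $n=5$ one uses this together with Mazet's refinement involving an iterated bubble/slicing argument inside $\Sigma^4$). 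Integrating the anisotropic Simons inequality against an appropriate cutoff on $\Sigma$ and invoking Gauss-Bonnet (resp.\ the $n=5$ dimensional replacement of Mazet) then yields the desired contradiction with the assumed unbounded volume growth, thus forcing Euclidean volume growth.

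The main obstacle, and where the explicit constants $\epsilon_4=\tfrac{3}{20}$ and $\epsilon_5=\tfrac{1}{1000}$ come from, is the careful bookkeeping of the anisotropy error terms through the whole chain of inequalities. Every time one uses Cauchy-Schwarz, the Kato inequality in its anisotropic form, an iteration of the stability inequality with test function a power of $|A^F|$, or a Bochner computation on $\Sigma$, one picks up a multiplicative factor depending on the gap in \eqref{cond-1}. The constants $\epsilon_n$ have to be small enough that the isotropic arguments of Chodosh-Li-Minter-Stryker (for $n=4$) and Mazet (for $n=5$) remain effective after these perturbations; tracking these optimally, and in particular balancing the factor losses in the improved Kato inequality against the coercivity of $D^2F(\nu)$ in the stability term, is the technical heart of the argument.
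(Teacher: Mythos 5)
Your proposal names the right sources to emulate but omits or misidentifies the mechanisms that actually make the argument work, so as written it has genuine gaps. First, the entire argument in the paper runs through the Gulliver--Lawson conformal change $\tilde g=r^{-2}g$ on $M\setminus\{0\}$: the pinched stability inequality is converted (after the pointwise bound $H^2\le\delta_n^2|A|^2$, which is the key anisotropic observation and which your sketch never isolates) into a \emph{spectral positivity} statement for the $\alpha$-bi-Ricci curvature of $\tilde g$, with explicit constants $\tau_n,\eta_n$ obtained from a quadratic-form estimate in the diagonal entries of $A$. Your plan places the $\mu$-bubble directly in an annulus of $(M,g)$ with a profile in the distance to $p$ and never mentions the conformal metric; without it there is no curvature condition on which to base the bubble's second variation. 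Second, the tool you invoke to close the argument --- an ``anisotropic Simons inequality'' integrated on $\Sigma$ together with Gauss--Bonnet --- is not available here and is not what Chodosh--Li--Minter--Stryker or Mazet do: for $n=4,5$ the warped $\mu$-bubble $\Sigma$ is $3$- or $4$-dimensional, so Gauss--Bonnet is useless (it is the tool for the $n=3$ case), and no Simons-type inequality appears anywhere in the proof. What is actually used is that the warped $\mu$-bubble (with weight $w^k$, $w$ a positive solution of $-\tilde\Delta w=(\tau_n-\eta_n\tilde\lambda_{\biRic_\alpha})w$, and $k=1/\eta_n$) inherits a spectral Ricci lower bound, verified through the positivity of an explicit $3\times3$ matrix, and then the spectral Bishop--Gromov theorem of Chodosh--Li--Minter--Stryker/Antonelli--Xu bounds $\Vol_{\tilde g}(\Sigma)$.

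Two further structural points are missing. The volume bound is obtained \emph{directly}, not by contradiction: the bubble's $\tilde g$-volume bound is rescaled to an area bound $\Vol_g(\Sigma)\lesssim R^{n-1}$, and the first-variation (anisotropic Michael--Simon type) isoperimetric inequality then gives $\Vol_g(B_R)\le C(F)R^n$; your contradiction framing never explains how unbounded growth would be contradicted. And before any of this one must know that $M$ has exactly one end, so that the region cut off by the bubble has connected boundary and contains $B_R(p)$; in the anisotropic setting this requires a new Cao--Shen--Zhu type argument (harmonic function with finite Dirichlet energy, improved Kato, and again the bound $H^2\le\delta_n^2|A|^2$ to absorb the nonvanishing mean curvature), which determines part of the admissible range of $\epsilon_n$. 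Your bookkeeping remark about where $\epsilon_4=\tfrac{3}{20}$, $\epsilon_5=\tfrac{1}{1000}$ come from is reasonable in spirit, but the actual constraints come from the positivity of the matrices in the bi-Ricci estimate, the positivity of $\tau_n$, $\Lambda_n$, and the admissibility condition in the spectral Bishop--Gromov theorem, none of which your outline sets up.
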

Combining with Winklmann's result \cite{Winklmann}, we get
\begin{coro}
   Let $n=4,5$ and assume that $\cA_F$ is $C^4$-sufficiently close to the area. Then any complete,  two-sided,  stable $F$-minimal immersed hypersurface in $\bR^{n+1}$ is flat.
\end{coro}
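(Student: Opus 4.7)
The plan is to deduce the corollary from Theorem~\ref{main-theo} together with Winklmann's theorem \cite{Winklmann}, which states that a complete, two-sided, stable $F$-minimal hypersurface in $\bR^{n+1}$ is flat whenever $\cA_F$ is $C^4$-close to the area functional and the hypersurface has Euclidean volume growth. The task is therefore to verify the hypotheses of both results and, in particular, to remove the simply-connectedness assumption present in Theorem~\ref{main-theo}.

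First I would check that $C^4$-closeness supplies the quantitative ellipticity \eqref{cond-1}. Writing $F=|\cdot|+G$ with $\|G\|_{C^4(\mathbb{S}^n)}$ small, the Hessian $D^2F(z)$ restricted to $z^\perp$ differs from the identity on $z^\perp$ (which is $D^2|\cdot|(z)\big|_{z^\perp}$ for $z\in\mathbb{S}^n$) by an endomorphism of size controlled by $\|G\|_{C^2}$. Hence for any prescribed $\epsilon_n>0$, in particular $\epsilon_4=3/20$ or $\epsilon_5=1/1000$, the bound \eqref{cond-1} holds once $\cA_F$ is sufficiently $C^4$-close to the area.

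Next I would pass to the universal cover $\pi\colon\tilde M\to M$, equipped with the pulled-back immersion $\tilde M\to\bR^{n+1}$. This cover is complete (the pullback of a complete metric along a covering map is complete), two-sided (the chosen unit normal lifts), simply-connected by construction, and $F$-minimal since $F$-minimality is an infinitesimally local condition. Stability also descends to $\tilde M$: stability of $M$ on any precompact subdomain $\Omega\subset M$ is equivalent to the existence of a positive solution of the anisotropic Jacobi equation on $\Omega$, and such a solution pulls back to a positive Jacobi field on $\pi^{-1}(\Omega)$; since every precompact subdomain of $\tilde M$ sits inside some $\pi^{-1}(\Omega)$, $\tilde M$ is stable.

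With this reduction in hand, Theorem~\ref{main-theo} applies to $\tilde M$ and yields $\Vol(B_R(\tilde p))\le C(F)R^n$ on the cover. Combined with the $C^4$-closeness, Winklmann's theorem then forces $\tilde M$ to be flat, and since flatness of the immersion is the pointwise vanishing of the second fundamental form, it descends through $\pi$ to $M$. The genuinely difficult part of the paper is concentrated in Theorem~\ref{main-theo}; inside the corollary the only mildly subtle point is that stability transfers to the universal cover in the anisotropic setting, which goes through because the anisotropic Jacobi operator remains linear elliptic and therefore admits the same positive-principal-eigenfunction characterization of stability as in the isotropic case.
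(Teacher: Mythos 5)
Your argument is exactly the paper's intended one: the corollary is obtained by combining Theorem \ref{main-theo} with Winklmann's curvature estimate, and the simply-connectedness hypothesis is removed by passing to the universal cover, where completeness, two-sidedness, $F$-minimality and (via the Fischer-Colbrie--Schoen positive-solution characterization, valid for the divergence-form anisotropic Jacobi operator) stability all lift, with flatness descending since $A\equiv 0$ is a local condition --- the paper leaves these routine reductions implicit. The only small point worth making explicit is that to get the exact lower bound $|\xi|^2\le D^2F(z)(\xi,\xi)$ in \eqref{cond-1} from $C^4$-closeness one should normalize by replacing $F$ with $cF$ for a suitable constant $c$ close to $1$, which changes neither $F$-minimality nor stability.
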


Let us first illustrate the $\mu$-bubble method to the Euclidean volume growth for stable minimal hypersurfaces developed by Chodosh-Li \cite{Chodosh-Li-anisotropic} in $\bR^4$, by Chodosh-Li-Minter-Stryker \cite{Chodosh-Li-Minter-Stryker-5bernstein} in $\bR^5$ and by Mazet \cite{Mazet-6Bernstein} in $\bR^6$. The first step is to consider a conformal metric $\tilde{g}=r^{-2}g$, where $r$ is the Euclidean distance to $p$, inspired by Gulliver-Lawson \cite{GL}. Then the stability of the minimal hypersurface can be transferred to the property of a positive spectral lower bound for the scalar curvature ($n=3$), or the bi-Ricci curvature ($n=4$) or  some weighted bi-Ricci curvature ($n=5$), for the induced metric $\tilde{g}$ on $M\setminus\{p\}$. The next step is to find a good $\mu$-bubble that satisfies a  positive spectral lower bound for Ricci curvature. This leads to a spectral Bishop-Gromov volume estimate by Chodosh-Li-Minter-Stryker \cite{Chodosh-Li-Minter-Stryker-5bernstein} in three dimensions and Antonelli-Xu \cite{Antonelli-Xu} in any dimensions, by using isoperimetric profile method originally due to Bray \cite{Bray97}. The last step is to convert the volume estimate of $\mu$-bubble to the original metric $g$ and the growth of volume of $M$ is controlled by isoperimetric inequality of Michael-Simon \cite{Sobolev-ineq} and Brendle \cite{Brendle}.

As mentioned above, Chodosh-Li's strategy for $\bR^4$ works for
 the anisotropic setting, when $F$ satisfies the pinching condition (\ref{cond-1}). We see that the above strategy also works for $\bR^5$ and $\bR^6$ when $D^2F$ is enough pinched.
The main observation is that $H^2\le \delta_{n}^2 |A|^2$ for some small $\delta_{n}$ when $D^2F$ is enough pinched, though the classical mean curvature $H$ does not vanish in the anisotropic case. We first prove that $M$ has only one end, generalizing Cao-Shen-Zhu's result \cite{Cao-Shen-Zhu1997}. Then similar to the classical case, we show that the stability of $F$-minimal hypersurface can be transferred to the property of a positive spectral lower bound for the bi-Ricci curvature ($n=4$) or  some weighted bi-Ricci curvature ($n=5$), for the induced conformal metric $\tilde{g}$. The other part of proof follows the line as above.

\section{PRELIMINARIES}

We recall the first and the second variational formulas for the anisotropic area functional, see for example \cite[Section A]{Chodosh-Li-anisotropic}.
Let $M^{n}\to\bR^{n+1}$ be a two-sided immersed hypersurface and $M_t$ be a compact supported variation fixing $\partial M$ with a variational vector field $f\nu$, $f\in  C^{1}_{0}(M\setminus \partial M)$. Then
$$\frac{d}{dt}\big|_{t=0}\mathcal{A}_F(M_{t})=\int_{M}\Div_{M}(DF(\nu))f.$$
The $F$-mean curvature is defined by $$H_{F}:=\Div_{M}(DF(\nu))=tr_M(\Psi(\nu)S_M),$$
where $S_{M}$ is the shape operator of $M$, $D$ is the connection in $\bR^{n+1}$, and the map $\Psi(\nu):T\bR^{n+1}\to T\bR^{n+1}$ is defined by $\Psi(\nu):V\mapsto D^{2}F(\nu)[V,\cdot]$.
Thus $M$ is $F$-minimal if and only if $H_F=0$.
Let $M^{n}\to\bR^{n+1}$ be an $F$-minimal two-sided immersed hypersurface, denote $\nabla$ by the induced connection on $M$, then
  $$\frac{d^2}{dt^2}\big|_{t=0}\cA_{F}(M_{t})=\int_{M}\bangle{\nabla f,\Psi(\nu)\nabla f}-\tr_{M}(\Psi(\nu)S_{M}^2)f^2.$$
Thus, when $F$ satisfies the pinching condition \eqref{cond-1} and $M$ is stable $F$-minimal, then
$$\int_{M}|\nabla f|^2-\frac{1}{1+\epsilon_{n}}|A|^2f^2\geq 0,$$
 for any $f\in C^{1}_{0}(M\setminus \partial M)$, where $A$ is the second fundamental form of the immersion.

The first variational formula leads to the following isoperimetric inequality.
 \begin{prop}[\cite{Chodosh-Li-anisotropic}]\label{iso-perimetric}
    Let $M^{n}\to\bR^{n+1}$ be a $F$-minimal immersion and the image of $\partial M$ is connected in $B^{\bR^{n+1}}_R$ for some $R>0$. Then $$|M|\leq\frac{R\|F\|_{C^{1}(\bS^{n})}}{n\cdot\min\limits_{\nu\in\bS^{n}}F(\nu)}|\partial M|,$$
    where $\|F\|_{C^{m}(\bS^n)}:=\big(\sum\limits_{j=0}^{m}\|D^{(j)}F\|^2_{C^{0}(\bS^n)}\big)^{\frac{1}{2}}$, and $B^{\bR^{n+1}}_R$ denotes the standard  ball of radius $R$ in $\bR^{n+1}$.
 \end{prop}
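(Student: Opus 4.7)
The plan is to couple the $1$-homogeneity of $F$ with a divergence identity tailored to $F$-minimal hypersurfaces, so that $n\,\mathcal{A}_F(M)$ becomes a boundary flux bounded by $\|F\|_{C^1(\bS^n)}\cdot R\cdot|\partial M|$. First, translating coordinates so that the ball $B_R^{\bR^{n+1}}$ containing the image of $\partial M$ is centered at the origin (this leaves $\mathcal{A}_F$ unchanged), one may assume $|x|\le R$ along $\partial M$; the connectedness hypothesis guarantees that a single such translation suffices. Then, for the dilation $\phi_t(x)=(1+t)x$, $1$-homogeneity of $F$ keeps $\nu$ fixed and scales the area element by $(1+t)^n$, so $\mathcal{A}_F(\phi_t(M))=(1+t)^n\mathcal{A}_F(M)$ and
\[
\frac{d}{dt}\Big|_{t=0}\mathcal{A}_F(\phi_t(M))=n\,\mathcal{A}_F(M).
\]

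To compute the same derivative by divergence, I would introduce the $\bR^{n+1}$-valued vector field
\[
V:=F(\nu)\,X-\langle X,\nu\rangle\,DF(\nu),\qquad X(x)=x,
\]
along $M$, and verify the identity $\Div_M V = n\,F(\nu)-\langle X,\nu\rangle\,H_F$. The derivation uses $\Div_M X=n$, the product rule $\Div_M(fW)=\langle\nabla^M f,W\rangle+f\,\Div_M W$, the standard formulas $\nabla^M\langle X,\nu\rangle=-A(X^T)$ and $\nabla^M F(\nu)=-A(DF(\nu)^T)$, and the definition $\Div_M DF(\nu)=H_F$; the two $\langle DF(\nu)^T,A(X^T)\rangle$ terms produced cancel thanks to symmetry of the shape operator $A$. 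On $F$-minimal $M$ this collapses to $\Div_M V=nF(\nu)$, and the Euler identity $\langle DF(\nu),\nu\rangle=F(\nu)$ forces $\langle V,\nu\rangle=0$, so the divergence theorem for $\bR^{n+1}$-valued fields on $M$ yields no mean-curvature contribution:
\[
n\,\mathcal{A}_F(M)=\int_{\partial M}\langle V,\eta\rangle\,d\sigma = \int_{\partial M}\bigl[F(\nu)\langle X,\eta\rangle-\langle X,\nu\rangle\langle DF(\nu),\eta\rangle\bigr]\,d\sigma,
\]
where $\eta$ is the outward unit conormal to $\partial M$ in $M$.

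To finish, rewriting the boundary integrand as $\langle X,\,F(\nu)\eta-\langle DF(\nu),\eta\rangle\nu\rangle$ and using $\eta\perp\nu$, the bracketed vector has Euclidean norm $\sqrt{F(\nu)^2+\langle DF(\nu),\eta\rangle^2}\le\sqrt{\|F\|_{C^0}^2+\|DF\|_{C^0}^2}=\|F\|_{C^1(\bS^n)}$, matching the paper's definition of $\|\cdot\|_{C^1}$. Combined with $|X|\le R$ on $\partial M$ and the elementary bound $\mathcal{A}_F(M)\ge\min_{\bS^n}F\cdot|M|$, one gets $n\min_{\bS^n}F\cdot|M|\le R\,\|F\|_{C^1(\bS^n)}\,|\partial M|$, which is the claimed inequality. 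The only delicate step is the divergence identity above: careful bookkeeping of the tangential/normal splittings of $X$ and of $DF(\nu)$ is required, and the precise cancellation of cross-terms through symmetry of $A$—together with $\langle V,\nu\rangle=0$ from Euler, which avoids any stray mean-curvature contribution in the divergence theorem—is what forces the constant to depend only on $\|F\|_{C^1}$.
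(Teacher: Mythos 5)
Your argument is correct: the divergence identity $\Div_M\bigl(F(\nu)X-\langle X,\nu\rangle DF(\nu)\bigr)=nF(\nu)-\langle X,\nu\rangle H_F$ checks out, Euler's identity makes the field tangential so the divergence theorem applies without a mean-curvature term, and the boundary flux bound together with $\mathcal{A}_F(M)\ge\min_{\bS^n}F\cdot|M|$ gives exactly the stated constant. This is essentially the same dilation/first-variation argument as in the cited reference \cite{Chodosh-Li-anisotropic} (the present paper quotes the proposition without reproducing a proof), so no further comparison is needed.
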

The stability of $M$ and the Michael-Simon-Sobolev inequality together lead to the $L^2$-Sobolev inequality on $M$, which implies that $M$ has infinite volume.
\begin{prop}[\cite{Chodosh-Li-anisotropic}]\label{infinite volume}
  Let $M^{n}\to\bR^{n+1}$ be a two-sided complete, stable $F$-minimal hypersurface, and $K$ is a compact subset of $M^{n}$. Then each unbounded component of $M\setminus K$ has infinite volume.
 \end{prop}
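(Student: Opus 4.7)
The plan is to first establish an $L^2$-Sobolev inequality on $M$ with no curvature correction term, and then use it to produce a polynomial lower bound on the volume growth of any unbounded component, which forces its volume to be infinite.

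For the Sobolev inequality, I would begin with the Michael--Simon--Sobolev inequality applied to a nonnegative $f \in C^1_0(M)$,
$$\Big(\int_M f^{\frac{n}{n-1}}\Big)^{\frac{n-1}{n}} \le C_n \int_M \big(|\nabla f| + |H|\,f\big),$$
where $H$ denotes the classical (Euclidean) mean curvature of the immersion. Substituting $f^{2(n-1)/(n-2)}$ in place of $f$ and applying H\"older to each term on the right yields, after standard rearrangement,
$$\Big(\int_M f^{\frac{2n}{n-2}}\Big)^{\frac{n-2}{n}} \le C_n' \int_M \big(|\nabla f|^2 + H^2 f^2\big).$$
Although $H$ need not vanish in the anisotropic case, the $F$-minimality equation $\tr_M(\Psi(\nu) S_M) = 0$ combined with the pinching \eqref{cond-1} gives a pointwise bound $H^2 \le \delta_n^2 |A|^2$ with $\delta_n$ small (the observation highlighted in the introduction). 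Feeding this into the stability inequality $\int_M |A|^2 f^2 \le (1+\epsilon_n)\int_M |\nabla f|^2$ absorbs the mean-curvature term and produces the clean estimate
$$\Big(\int_M f^{\frac{2n}{n-2}}\Big)^{\frac{n-2}{n}} \le C(F)\int_M |\nabla f|^2, \qquad f \in C^1_0(M).$$

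To conclude, let $\Omega$ be an unbounded component of $M \setminus K$. For each large $R$, choose $y_R \in \Omega$ with $d_M(y_R, K) \ge 2R$ so that $B_{2R}(y_R) \subset \Omega$, and test the Sobolev inequality against a cutoff $\phi_R$ supported in $B_{2R}(y_R)$ with $\phi_R \equiv 1$ on $B_R(y_R)$ and $|\nabla \phi_R| \le 2/R$; this gives
$$|B_R(y_R)|^{\frac{n-2}{n}} \le \frac{4\,C(F)}{R^2}\,|B_{2R}(y_R)|.$$
A standard dyadic iteration of this estimate, starting from a small scale where smoothness of the immersion provides a positive lower bound on the density, yields $|B_R(y_R)| \ge c(F)\,R^n$ for all large $R$, so $|\Omega| \ge |B_R(y_R)| \to \infty$.

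The main obstacle is the first step. In the classical minimal case $H \equiv 0$, so the $H^2 f^2$ term disappears and the Sobolev estimate is immediate; in the anisotropic setting one must control $|H|$ by $|A|$ with a constant small enough that the factor $\delta_n^2(1+\epsilon_n)$ arising from stability is strictly less than $1$, so that the absorption closes. This is precisely what forces the closeness hypothesis \eqref{cond-1} on $D^2F$ and is the quantitative reason for the explicit values of $\epsilon_n$ in Theorem~\ref{main-theo}.
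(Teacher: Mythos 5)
Your outline is the same route this paper (quoting Chodosh--Li) intends: Michael--Simon--Sobolev plus stability and a pointwise bound of $|H|$ by $|A|$ give a clean $L^2$-Sobolev inequality $\bigl(\int_M f^{2n/(n-2)}\bigr)^{(n-2)/n}\le C(F)\int_M|\nabla f|^2$, which then forces unbounded components of $M\setminus K$ to have infinite volume. However, your closing paragraph misidentifies ``the main obstacle'': there is no absorption in this step. The term $\int_M H^2f^2$ sits on the \emph{right-hand} side of the Sobolev inequality, so any bound $|H|\le c(F)|A|$ suffices, and such a bound follows from ellipticity of $F$ alone (diagonalize $A$ and use $0=H_F=\sum_i a_i\kappa_i$ with $0<\lambda\le a_i\le\Lambda$); together with stability it gives $\int_M H^2f^2\le c(F)\int_M|\nabla f|^2$, and this constant merely enters $C(F)$ -- it need not be less than $1$. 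This is consistent with the fact that the proposition, as stated, carries no pinching hypothesis; the smallness in \eqref{cond-1} is needed elsewhere (one-endedness, the conformal bi-Ricci estimates), not here.

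The genuine soft spot is the volume-growth step. Your dyadic iteration is anchored at ``a small scale where smoothness of the immersion provides a positive lower bound on the density,'' but your centers $y_R$ escape to infinity as $R$ grows, and smoothness alone gives no lower bound on $|B_1(y_R)|$ (nor on the scale at which the density bound kicks in) that is uniform in $R$: a complete immersed hypersurface can contain arbitrarily thin regions, so as written the constant in $|B_R(y_R)|\ge c(F)R^n$, or the threshold ``for all large $R$,'' could degenerate along the sequence $y_R$. The standard repair uses only the Sobolev inequality you already proved: it implies the Faber--Krahn bound $\lambda_1(U)\ge c\,C(F)^{-1}|U|^{-2/n}$ for bounded open $U\subset M$, and iterating the resulting inequality downward in scale at a \emph{fixed} center $x$ yields $|B_r(x)|\ge c(n,C(F))\,r^n$ for every $x\in M$ and $r>0$, with a constant independent of the center because the small-scale density (which at a fixed point does tend to the smooth value) enters only through an exponent tending to zero. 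With this uniform bound, $|\Omega|\ge|B_R(y_R)|\ge c(n,C(F))R^n\to\infty$, and the proposition follows. So your conclusion and overall strategy are right, but the iteration needs to be run in this center-uniform form rather than from a smoothness-based base case.
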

 On the other hand, combining the $L^2$-Sobolev inequality and Proposition \ref{infinite volume}, the same arguments as used by Cao-Shen-Zhu  \cite{Cao-Shen-Zhu1997} imply the following.
 \begin{prop}[\cite{Chodosh-Li-anisotropic}]\label{least-two ends}
   Let $M^{n}\to\bR^{n+1}$ be a complete two-sided, stable $F$-minimal immersed hypersurface with at least two ends, then there is a bounded non-constant harmonic function $u$ on $M$ with finite Dirichlet energy.
 \end{prop}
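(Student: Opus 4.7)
The plan is to establish that $M$ is non-parabolic with at least two non-parabolic ends and then run the classical Li--Tam construction of bounded harmonic functions, as implemented by Cao--Shen--Zhu in the minimal setting. The anisotropic hypotheses enter only via the $L^2$-Sobolev inequality that was already invoked for Proposition \ref{infinite volume}: combining the stability inequality $\int |A|^2 f^2 \le (1+\epsilon_n)\int |\nabla f|^2$ with the Michael--Simon--Sobolev inequality yields $\|f\|_{L^{2n/(n-2)}} \le C\|\nabla f\|_{L^2}$ for all $f\in C_c^1(M)$. From this Sobolev inequality one immediately reads off that every nonempty compact $K\subset M$ has capacity at least $c\,|K|^{(n-2)/n}>0$, so $M$ is non-parabolic, i.e.\ $M$ admits a positive minimal Green's function.

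Since the Sobolev inequality restricts to each unbounded component of $M\setminus K$ and each such component has infinite volume by Proposition \ref{infinite volume}, each end of $M$ is itself non-parabolic. I would then fix a smoothly bounded compact $K\subset M$ whose complement has at least two unbounded components, grouped into disjoint open sets $E_1$ and $E_2$, take a smooth exhaustion $\Omega_1\Subset\Omega_2\Subset\cdots$ with $K\Subset \Omega_1$, and solve the Dirichlet problem
\begin{equation*}
\Delta u_i=0 \text{ on } \Omega_i, \qquad u_i|_{\partial\Omega_i\cap E_1}=1,\qquad u_i|_{\partial\Omega_i\setminus E_1}=0 .
\end{equation*}
The maximum principle gives $0\le u_i\le 1$. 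To get a uniform Dirichlet energy bound I will compare $u_i$ with a cutoff realizing the (finite) capacity of $\partial K$ relative to the non-parabolic end $E_1$; minimality of $u_i$ in its Dirichlet class then forces $\int_{\Omega_i}|\nabla u_i|^2\le C$ independently of $i$. Schauder estimates produce a subsequential limit $u$ which is harmonic on $M$, satisfies $0\le u\le 1$, and has finite Dirichlet energy by weak lower semicontinuity.

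The main obstacle is to ensure that $u$ is \emph{non-constant}. This is precisely where the non-parabolicity of \emph{both} ends $E_1$ and $E_2$ is essential: using barriers built from the Green's functions of the two ends, $u_i$ stays bounded away from $0$ on a fixed compact subset deep inside $E_1$ and bounded away from $1$ on one deep inside $E_2$, with bounds independent of $i$. These two-sided bounds pass to the limit and force $u$ to take two distinct interior values, so the strong maximum principle rules out constancy. This completes the adaptation of the Cao--Shen--Zhu argument to the anisotropic setting, with no new analytic input beyond the $L^2$-Sobolev inequality already used in Proposition \ref{infinite volume}.
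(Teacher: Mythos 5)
Your proposal is correct and takes essentially the same route as the paper, which simply invokes the $L^2$-Sobolev inequality (stability plus Michael--Simon, with $H$ controlled by $|A|$) together with Proposition~\ref{infinite volume} and then runs the Cao--Shen--Zhu \cite{Cao-Shen-Zhu1997} / Li--Tam argument: each end is non-parabolic, and the exhaustion/Dirichlet-problem construction with capacity comparison and barriers yields a bounded non-constant harmonic function of finite Dirichlet energy. There is no substantive difference from the cited argument.
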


The following lemma establishes a control of the mean curvature $H$ by the second fundamental form $A$ of $M$, under the assumption (\ref{cond-1}) that $D^{2}F(\nu)$ is sufficiently pinched.
\begin{lemm}
  Assume $F$ satisfies (\ref{cond-1}). Let $M^{n}\to \bR^{n+1}$ be an $F$-minimal immersion. Then  $$H^2\leq \delta_{n}^2|A|^2,$$
  where the constant $\delta_{n}^2=\frac{(n-1)\epsilon_{n}^2}{(1+\epsilon_{n})^2}$.
\end{lemm}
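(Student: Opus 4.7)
The plan is to work pointwise at an arbitrary $p \in M$ and exploit the fact that the $F$-minimality identity $H_F = 0$ is a linear constraint on the diagonal of $S_M$ which, in a basis diagonalizing $\Psi(\nu)$, forces near cancellation in the classical mean curvature $H = \tr S_M$. Since $F$ is $1$-homogeneous, differentiating Euler's identity twice yields $D^2 F(\nu)[\nu,\cdot] = 0$, so $\Psi(\nu)\,\nu = 0$ and $\Psi(\nu)$ restricts to a symmetric positive-definite endomorphism of $T_pM = \nu^\perp$. The pinching hypothesis \eqref{cond-1} then says its eigenvalues $\lambda_1,\dots,\lambda_n$ all lie in $[1,\,1+\epsilon_n]$.

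Now choose an orthonormal frame $\{e_i\}$ of $T_pM$ diagonalizing $\Psi(\nu)$ and let $S_{ij}$ denote the components of $S_M$ in this frame. Then $H_F = \sum_i \lambda_i S_{ii}$ while $H = \sum_i S_{ii}$. Pick an index $i_0$ with $\lambda_{i_0} = \max_i \lambda_i$, solve $H_F = 0$ for $S_{i_0 i_0}$, and substitute into the expression for $H$ to obtain
\[
H \;=\; \sum_{i\ne i_0}\Bigl(1-\frac{\lambda_i}{\lambda_{i_0}}\Bigr)S_{ii}.
\]
Because $\lambda_{i_0}$ is the \emph{maximum}, one has $\lambda_i/\lambda_{i_0} \in [(1+\epsilon_n)^{-1},\,1]$, so each coefficient satisfies $0 \le 1 - \lambda_i/\lambda_{i_0} \le \epsilon_n/(1+\epsilon_n)$.

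Applying Cauchy--Schwarz on the remaining $n-1$ indices yields
\[
H^2 \;\le\; \Bigl(\sum_{i\ne i_0}\bigl(1-\tfrac{\lambda_i}{\lambda_{i_0}}\bigr)^2\Bigr)\Bigl(\sum_{i\ne i_0}S_{ii}^2\Bigr) \;\le\; \frac{(n-1)\,\epsilon_n^2}{(1+\epsilon_n)^2}\sum_i S_{ii}^2 \;\le\; \delta_n^2\,|A|^2,
\]
using $\sum_i S_{ii}^2 \le \sum_{i,j} S_{ij}^2 = |A|^2$. The argument is essentially algebraic with no real obstacle; the only substantive choice is selecting $i_0$ to realize the maximum (rather than, say, the minimum or an arbitrary index) of the $\lambda_i$'s, as this is what sharpens the pointwise coefficient bound from $\epsilon_n$ down to $\epsilon_n/(1+\epsilon_n)$ and produces the stated denominator $(1+\epsilon_n)^2$.
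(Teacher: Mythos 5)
Your proof is correct and takes essentially the same route as the paper: use the linear constraint $H_F=0$ to solve for the diagonal entry carrying the largest $D^2F$-coefficient, note that the resulting coefficients $1-\lambda_i/\lambda_{i_0}$ lie in $[0,\epsilon_n/(1+\epsilon_n)]$ by the pinching, and conclude with Cauchy--Schwarz over the remaining $n-1$ indices. The only (immaterial) difference is the choice of frame: you diagonalize $\Psi(\nu)$ and use $\sum_i S_{ii}^2\le |A|^2$, while the paper diagonalizes $A$ and works with the principal curvatures, the quadratic-form values $a_i=D^2F(\nu)[e_i,e_i]\in[1,1+\epsilon_n]$ playing the role of your eigenvalues.
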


\begin{proof}
 If we diagonalise the second fundamental form $A$ at a given point $p$, and denote $\kappa_{i}$ and $e_i$ for the principal curvatures and the corresponding principal directions, then the $F$-mean curvature at $p$ can be written as $$H_F=\sum\limits_{i=1}^{n}a_{i}\kappa_{i},$$
  where $a_{i}=D^2F(\nu)[e_i, e_i]$. Without loss of generality, we assume that $a_{1}\leq a_{2}\leq \cdots\leq a_{n}$. The pinching assumption (\ref{cond-1}) implies $$1\leq a_{1}\leq a_{2}\leq \cdots\leq a_{n}\leq 1+\epsilon_{n}.$$
  Since $H_F=0$, we have $$\kappa_{n}=-\frac{a_{1}\kappa_{1}+a_{2}\kappa_{2}+\cdots+a_{n-1}\kappa_{n-1}}{a_{n}}.$$ Write $\beta_{1}=\frac{a_{1}}{a_{n}}$, $\beta_{2}=\frac{a_{2}}{a_{n}}$,$\cdots$, $\beta_{n-1}=\frac{a_{n-1}}{a_{n}}$, with $\frac{1}{1+\epsilon_{n}}\leq\beta_{1}\leq\beta_{2}\leq\cdots\leq\beta_{n}\leq 1$. Then, we obtain
$$H^2=(\kappa_{1}+\kappa_{2}+\cdots+\kappa_{n})^2=((1-\beta_{1})\kappa_{1}+(1-\beta_{2})\kappa_{2}+\cdots+(1-\beta_{n-1})\kappa_{n-1})^2.$$
Using Cauchy-Schwarz's inequality, we get
\begin{align*}
  \frac{H^2}{|A|^2}&=\frac{((1-\beta_{1})\kappa_{1}+(1-\beta_{2})\kappa_{2}+\cdots+(1-\beta_{n-1})\kappa_{n-1})^2}{\kappa_{1}^2+\kappa_{2}^2+\cdots+\kappa_{n-1}^{2}+\kappa_{n}^2}\\
&\leq\frac{(n-1)((1-\beta_{1})^2\kappa_{1}^2+(1-\beta_{2})^2\kappa_{2}^2+\cdots+(1-\beta_{n-1})^2\kappa_{n-1}^2)}{\kappa_{1}^2+\kappa_{2}^2+\cdots+\kappa_{n-1}^2}\\
&\leq (n-1)(1-\frac{1}{1+\epsilon_{n}})^2=\delta_{n}^2.
\end{align*}
\end{proof}
\section{one-endness of stable $F$-minimal hypersurfaces in $\bR^{n+1}$}
In this section, we prove that any $n$-dimensional complete two-sided stable $F$-minimal immersed hypersurface $M^{n}\to \bR^{n+1}$ must have exactly one end, if $D^{2}F$ is sufficiently pinched.
\begin{lemm}\label{one-end-improve}
Assume $F$ satisfies (\ref{cond-1}). Let $M^n$ be a complete two-sided, stable $F$-minimal immersion in $\bR^{n+1}$ and $u$ be a harmonic function on $M$. Then
\begin{align*}
\Lambda_{n}\int_{M}\varphi^2|A|^2|\nabla u|^2+\frac{1}{n-1}\int_{M}\varphi^2|\nabla|\nabla u||^2\leq\int_{M}|\nabla \varphi|^2|\nabla u|^2.
\end{align*}
for any $\varphi\in C^{1}_{0}(M)$, where $\Lambda_{n}=\frac{1}{1+\epsilon_{n}}-\frac{n-1}{n}\left(1+\frac{n-2}{\sqrt{n-1}}\frac{\epsilon_{n}}{1+\epsilon_{n}}\right)$.
\end{lemm}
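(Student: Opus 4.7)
The strategy is to combine a Bochner identity for the harmonic function $u$ with the $F$-stability inequality applied to the test function $f=\varphi|\nabla u|$, and then reduce the problem to a single pointwise algebraic inequality that is closed using the pinching estimate $H^2\le\delta_n^2|A|^2$ from the previous lemma.

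Set $w=|\nabla u|$. For harmonic $u$, Bochner's identity is $\tfrac{1}{2}\Delta w^2 = |\nabla^2 u|^2 + \Ric(\nabla u,\nabla u)$, and combined with $w\Delta w = \tfrac{1}{2}\Delta w^2 - |\nabla w|^2$, the refined Kato inequality $|\nabla^2 u|^2\ge \tfrac{n}{n-1}|\nabla w|^2$ valid for harmonic functions, and the Gauss equation $\Ric(X,X)=H\langle AX,X\rangle-|AX|^2$ for a hypersurface in $\bR^{n+1}$, I obtain the pointwise inequality
\[
w\Delta w \ge \tfrac{1}{n-1}|\nabla w|^2 + H\langle A\nabla u,\nabla u\rangle - |A\nabla u|^2.
\]

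I then multiply this inequality by $\varphi^2$, integrate, and integrate by parts the $\Delta w$ term to produce
\[
\int\varphi^2 |A\nabla u|^2 - \int\varphi^2 H\langle A\nabla u,\nabla u\rangle \ge \tfrac{n}{n-1}\int\varphi^2|\nabla w|^2 + 2\int w\varphi\langle\nabla\varphi,\nabla w\rangle.
\]
Independently, the $F$-stability inequality (from Section 2) applied with $f=\varphi w$ reads
\[
\tfrac{1}{1+\epsilon_n}\int\varphi^2|A|^2 w^2 \le \int\varphi^2|\nabla w|^2 + 2\int w\varphi\langle\nabla\varphi,\nabla w\rangle + \int w^2|\nabla\varphi|^2.
\]
Subtracting the Bochner inequality from the stability inequality makes the mixed term $\int w\varphi\langle\nabla\varphi,\nabla w\rangle$ cancel and reduces the coefficient of $\int\varphi^2|\nabla w|^2$ to $-\tfrac{1}{n-1}$, yielding
\[
\int\varphi^2\!\left[\tfrac{|A|^2 w^2}{1+\epsilon_n}-|A\nabla u|^2+H\langle A\nabla u,\nabla u\rangle\right]+\tfrac{1}{n-1}\int\varphi^2|\nabla w|^2 \le \int w^2|\nabla\varphi|^2.
\]

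All that remains is the pointwise bound $|A\nabla u|^2 - H\langle A\nabla u,\nabla u\rangle \le \tfrac{n-1}{n}\bigl(1+\tfrac{n-2}{\sqrt{n-1}}\tfrac{\epsilon_n}{1+\epsilon_n}\bigr)|A|^2 w^2$. To prove it, I decompose $A = \tilde A + \tfrac{H}{n}I$ with $\tilde A$ trace-free (so $|\tilde A|^2 = |A|^2 - H^2/n$), which after expansion rewrites the left-hand side as $|\tilde A\nabla u|^2 - \tfrac{n-2}{n}H\langle \tilde A\nabla u,\nabla u\rangle - \tfrac{n-1}{n^2}H^2 w^2$. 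I then use the elementary fact that for a trace-free symmetric matrix the maximal eigenvalue satisfies $\max_i\mu_i^2\le\tfrac{n-1}{n}|\tilde A|^2$, which yields $|\tilde A\nabla u|\le\sqrt{\tfrac{n-1}{n}}|\tilde A|w$; estimate the cross term by Cauchy--Schwarz together with this refined bound; and finally absorb the $H$-factors using $|H|\le\delta_n|A|$ with $\delta_n^2=(n-1)\epsilon_n^2/(1+\epsilon_n)^2$. The main obstacle is precisely the bookkeeping in this pointwise step: the indefinite sign of $\tfrac{n-2}{n}H\langle \tilde A\nabla u,\nabla u\rangle$ and the interplay between $|\tilde A|$ and $|A|$ must be handled with care so that the stated constant $\Lambda_n$ emerges exactly. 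Once this inequality is in hand the lemma follows by substituting it into the previous display.
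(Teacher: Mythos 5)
Your proposal is correct and follows essentially the same route as the paper: the stability inequality with test function $f=\varphi|\nabla u|$, the Bochner formula combined with the refined Kato inequality, the Gauss equation, and the pinching bound $H^2\le\delta_n^2|A|^2$ from the preceding lemma. The only difference is in the pointwise curvature estimate: the paper bounds $-\Ric_M(e_1,e_1)$ directly in a principal basis, while you use the trace-free decomposition $A=\tilde A+\frac{H}{n}\Id$ together with $|\tilde A\,v|\le\sqrt{\tfrac{n-1}{n}}\,|\tilde A|\,|v|$; this does close (after discarding the nonpositive $H^2$-terms and using $|\tilde A|\le|A|$, $|H|\le\delta_n|A|$ it even gives the slightly better coefficient $\tfrac{n-1}{n}+\tfrac{n-2}{n}\sqrt{\tfrac{n-1}{n}}\,\delta_n\le\tfrac{n-1}{n}\bigl(1+\tfrac{n-2}{\sqrt{n-1}}\tfrac{\epsilon_n}{1+\epsilon_n}\bigr)$), so the stated $\Lambda_n$ follows.
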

\begin{proof}
Fix $p\in M$, choose an orthonormal basis $\{e_{i}\}_{i=1}^{n}$ in $T_{p}M$. Denote $A_{ij}=A(e_{i},e_{j})$. By the Gauss equation,
\begin{align*}
  \Ric_{M}(e_{1},e_{1})
  &=A_{11}H-A_{11}^2-\sum\limits_{i=2}^{n}A_{1i}^2.
\end{align*}
Using this, we estimate $|A|^2$ as follows:
\begin{align*}
  |A|^2 &\geq A_{11}^2+\sum\limits_{i=2}^{n}A_{ii}^2+2\sum\limits_{i=2}^{n}A_{1i}^2 \\
   & \geq A_{11}^2+\frac{1}{n-1}(\sum\limits_{i=2}^{n}A_{ii})^2+2\sum\limits_{i=2}^{n}A_{1i}^2\\
  &=A_{11}^2+\frac{1}{n-1}(H^2-2HA_{11}+A_{11}^2)+2\sum\limits_{i=2}^{n}A_{1i}^2\\
&\geq -\frac{n}{n-1}\Ric_{M}(e_{1},e_{1})+\frac{n-2}{n-1}HA_{11}\\
  &\geq -\frac{n}{n-1}\Ric_{M}(e_{1},e_{1})- \frac{n-2}{n-1}\delta_{n}|A|^2,\\
  \end{align*}
where  we used the fact $|H|\geq-\delta_{n}|A|$ in the last inequality.
Thus, we obtain a lower bound on the  Ricci curvature:
\begin{align}\label{Ric-inq}
\Ric_{M}(e_{1},e_{1})\geq-\frac{n-1}{n}\left(1+ \frac{n-2}{n-1}\delta_{n}\right)|A|^2.
\end{align}
Since $M^n$ is stable $F$-minimal, the pinching condition (\ref{cond-1}) implies that
$$\int_{M}\frac{1}{1+\epsilon_{n}}|A|^2f^2\leq\int_{M}|\nabla f|^2, \forall f\in C^{1}_{0}(M).$$
Replacing $f$ by $|\nabla u|\varphi$, we rewrite the $F$-stability inequality as below:
\begin{align}\label{eq-x1}
  \frac{1}{1+\epsilon_{n}}\int_{M}\varphi^2|\nabla u|^2|A|^2 & \leq\int_{M}|\nabla\varphi|^2|\nabla u|^2+2\int_{M}(\varphi|\nabla u|\bangle{\nabla\varphi,\nabla|\nabla u|}+\varphi^2|\nabla|\nabla u||^2) \\
  &=\int_{M}|\nabla\varphi|^2|\nabla u|^2-\int_{M}\varphi^2|\nabla u|\Delta|\nabla u|.\nonumber
\end{align}
On the one hand, by the improved Kato inequality for harmonic functions, $$|\nabla^{2}u|^2\geq\frac{n}{4(n-1)}|\nabla u|^{-2}|\nabla|\nabla u|^2|^{2}=\frac{n}{n-1}|\nabla|\nabla u||^2.$$
Combined with the Bochner formula
\begin{align*}
\Delta|\nabla u|^2&=2\Ric_{M}(\nabla u,\nabla u)+2|\nabla^2 u|^2,
\end{align*} the improved Kato inequality for harmonic functions
$$|\nabla^{2}u|^2\geq\frac{n}{4(n-1)}|\nabla u|^{-2}|\nabla|\nabla u|^2|^{2}=\frac{n}{n-1}|\nabla|\nabla u||^2,$$ and the inequality (\ref{Ric-inq}) with $e_1=\frac{\nabla u}{|\nabla u|}$, we have
$$\Delta|\nabla u|\geq-\frac{n-1}{n}\left(1+ \frac{n-2}{n-1}\delta_{n}\right)|A|^{2}|\nabla u|+\frac{1}{n-1}|\nabla u|^{-1}|\nabla|\nabla u||^2.$$
Inserting into \eqref{eq-x1}, we get the assertion.
\end{proof}
\begin{rema}\label{rema-2}
$\Lambda_{n}>0$ if $\epsilon_n>0$ is sufficiently small. In particular, in dimensions  $n=4,5$, we can choose the following pinching constant:
\begin{itemize}
  \item $n=4$, let $1+\epsilon_{4}=\frac{23}{20}$, then $\Lambda_{4}\approx 0.0335$.
  \item $n=5$, let $1+\epsilon_{5}=\frac{1001}{1000}$, then $\Lambda_{5}\approx 0.1998$.
\end{itemize}
\end{rema}
By using a standard argument as Cao-Shen-Zhu \cite{Cao-Shen-Zhu1997}, we get the one-endness of $M$.
\begin{prop}\label{one-end}
  Assume $F$ satisfies (\ref{cond-1}) such that $\Lambda_n>0$. Then any complete, two-sided stable $F$-minimal immersion $M^{n}$ in $\bR^{n+1}$ has only one end.
\end{prop}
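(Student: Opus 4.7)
The plan is to argue by contradiction, combining Lemma \ref{one-end-improve} with the existence of a nontrivial harmonic function from Proposition \ref{least-two ends} and the infinite volume property from Proposition \ref{infinite volume}. Suppose toward a contradiction that $M$ has at least two ends. Then by Proposition \ref{least-two ends} there is a bounded non-constant harmonic function $u$ on $M$ with $\int_M |\nabla u|^2 < \infty$.

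Next I would feed $u$ into Lemma \ref{one-end-improve} together with a standard exhaustion cutoff. Fix $p\in M$, and pick $\varphi_R \in C^1_0(M)$ with $\varphi_R \equiv 1$ on the intrinsic ball $B_R(p)$, $\varphi_R \equiv 0$ outside $B_{2R}(p)$, and $|\nabla \varphi_R| \leq C/R$. The conclusion of Lemma \ref{one-end-improve} applied to $\varphi_R$ reads
\begin{align*}
\Lambda_n \int_M \varphi_R^2 |A|^2 |\nabla u|^2 + \frac{1}{n-1} \int_M \varphi_R^2 \bigl|\nabla |\nabla u|\bigr|^2 \leq \int_M |\nabla \varphi_R|^2 |\nabla u|^2 \leq \frac{C^2}{R^2} \int_{B_{2R}(p)} |\nabla u|^2.
\end{align*}
Since $u$ has finite Dirichlet energy, the right-hand side tends to $0$ as $R\to\infty$. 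Because $\Lambda_n > 0$ by assumption, the monotone convergence theorem then forces both
\begin{align*}
\int_M |A|^2 |\nabla u|^2 = 0 \quad \text{and} \quad \int_M \bigl|\nabla |\nabla u|\bigr|^2 = 0.
\end{align*}

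The second identity shows $|\nabla u|$ is constant on the (connected) manifold $M$. If this constant were nonzero, then $\int_M |\nabla u|^2 = c^2\,\mathrm{Vol}(M)$ would be infinite, since $M$ itself has infinite volume by Proposition \ref{infinite volume}; this contradicts the finiteness of the Dirichlet energy. Hence $|\nabla u| \equiv 0$, so $u$ is constant, contradicting the non-constancy of $u$ guaranteed by Proposition \ref{least-two ends}. Therefore $M$ cannot have two or more ends, and on the other hand a complete stable $F$-minimal hypersurface of infinite volume cannot be compact (and hence has at least one end), so $M$ has exactly one end. The main obstacle was bundled into Lemma \ref{one-end-improve}, whose positive coefficient $\Lambda_n$ allows us to absorb the Ricci and Kato terms; past that, the argument is a routine Cao--Shen--Zhu-type cutoff combined with the infinite volume property already established.
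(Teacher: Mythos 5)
Your proposal is correct and follows essentially the same route as the paper: contradiction via Proposition \ref{least-two ends}, the cutoff applied in Lemma \ref{one-end-improve} with finite Dirichlet energy forcing $|\nabla u|$ to be constant, and then the infinite-volume property (Proposition \ref{infinite volume}) ruling out a nonzero constant. The only difference is cosmetic: the paper phrases the endgame as "nonzero constant gradient implies finite volume, contradiction," while you phrase it as "nonzero constant implies infinite Dirichlet energy, hence $u$ is constant, contradiction" — the same argument.
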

\begin{proof}
  We argue by contradiction. If not, then $M^{n}$ has at least two ends. Therefore, Proposition \ref{least-two ends} implies that $M^{n}$ admits a nontrivial harmonic function $u$ with finite Dirichlet energy, i.e. $\int_{M}|\nabla u|^2\leq C<\infty$. For $R>0$, take $\varphi\in C^{1}_{0}(M)$, such that $\varphi|_{B_{R}(0)}=1$, $\varphi|_{B_{2R}\setminus B_{R}}=0$ and $|\nabla \varphi|\leq\frac{2}{R}$. Then Lemma \ref{one-end-improve} implies that
  \[
  \int_{B_{R}}\Lambda_n|A|^2|\nabla u|^2+\frac{1}{n-1}|\nabla|\nabla u||^2\leq\frac{4}{R^2}\int_{M}|\nabla u|^2\leq\frac{4C}{R^2}.
  \]
  Sending $R\to\infty$, we conclude that $|\nabla|\nabla u||^2=0.$
  In particular, this implies that $|\nabla u|$ is a constant. But $u$ is not a constant, which means that $|\nabla u|>0$. Hence, this implies that $$\int_{M}1=\frac{1}{|\nabla u|^2}\int_{M}|\nabla u|^2<\infty,$$
  contradicts with Proposition \ref{least-two ends}.
\end{proof}

\section{A conformal deformation of metrics}
Let $M^n\to\bR^{n+1} (n=4,5)$ be a complete two-sided stable $F$-minimal immersed hypersurface passing through $0\in \bR^{n+1}$.
Let $g$ be the induced metric on $M$ and consider the conformal deformation by Gulliver-Lawson \cite{GL} on $M\setminus\{0\}$, i.e., $N=M\setminus\{0\}$ with metric $\tilde{g}=r^{-2}g$, where $r(x)=dist_{\bR^{n+1}}(0, x)$. We denote the related notations in $(N, \tilde{g})$ by a tilde.

The aim of this section is to prove the following
\begin{prop}\label{low-bound}
Let $n=4, 5$. Assume $F$ satisfies (\ref{cond-1}). Let $M^n\to\bR^{n+1}$ be a complete two-sided stable $F$-minimal immersed hypersurface passing through $0\in \bR^{n+1}$. Then for any $\varphi\in C^{1}_{0}(N,\tilde{g})$,
  $$\int_{N}|\tilde{\nabla} \varphi|^2_{\tilde{g}}d\tilde{\mu}\geq \int_{N}(\tau_{n}-\eta_{n}\tilde{\lambda}_{\biRic_{\alpha}})\varphi^2d\tilde{\mu},$$
  where $\tau_n>0$ and $\eta_n>0$ are given by
  \begin{itemize}
  \item $n=4$, $\quad \tau_4=2\cdot\frac{1+\epsilon_{4}-6\epsilon_{4}^2}{(1+\epsilon_{4})^2}-\frac{5}{4}, \quad \eta_4=\frac{1+\epsilon_{4}-6\epsilon_{4}^2}{(1+\epsilon_{4})^2}.$
  \item $n=5$, $\quad \tau_5=\frac{375}{112}\cdot\frac{1+\epsilon_{5}-12\epsilon_{5}^2}{(1+\epsilon_{5})^2}-\frac{21}{8}, \quad \eta_5=\frac{25}{28}\cdot\frac{1+\epsilon_{5}-12\epsilon_{5}^2}{(1+\epsilon_{5})^2}.$
\end{itemize}
\end{prop}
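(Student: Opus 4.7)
The plan is to follow the strategy developed by Chodosh-Li \cite{Chodosh-Li-anisotropic}, Chodosh-Li-Minter-Stryker \cite{Chodosh-Li-Minter-Stryker-5bernstein}, and Mazet \cite{Mazet-6Bernstein} in the classical minimal-hypersurface setting, and to absorb the extra error caused by $H \neq 0$ via the anisotropic pinching estimate $H^2 \leq \delta_n^2 |A|^2$ from the previous lemma. The proof will consist of two coupled steps. Step 1 is a pointwise curvature-algebra computation on $(M, g)$ converting the stability inequality into a spectral inequality in which $|A|^2$ is replaced by a positive multiple of the (weighted) bi-Ricci. Step 2 is a conformal change of variables through $\tilde g = r^{-2} g$, implemented via the test-function ansatz $f = r^{-(n-2)/2} \psi$, which pushes the inequality from Step 1 forward to $(N, \tilde g)$.

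For Step 1, I would fix a point $p \in M$ with an orthonormal frame $\{e_i\}$ and apply the Gauss equation to express $\biRic(e_1, e_2)$ (for $n=4$) or the weighted $\biRic_\alpha(e_1, e_2)$ (for $n=5$, with the appropriate choice of weight $\alpha$ following Mazet) in terms of the entries of $A$ and $H$. A Cauchy-Schwarz juggling analogous to the one in the proof of Lemma 5, combined with $|H| \leq \delta_n |A|$, should yield a pointwise bound of the form
\begin{align*}
\frac{|A|^2}{1+\epsilon_n} \geq \eta_n (1+\epsilon_n)\,\biRic_\alpha(e_1, e_2) - O(\epsilon_n^2)\,|A|^2,
\end{align*}
in which absorbing the error into the left-hand side produces the factor $(1+\epsilon_n - c_n \epsilon_n^2)/(1+\epsilon_n)^2$ (with $c_4 = 6$, $c_5 = 12$) sitting inside $\eta_n$. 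Plugging into the $F$-stability inequality then yields a stability-type inequality on $(M, g)$ in which the curvature weight is $\eta_n\,\tilde\lambda_{\biRic_\alpha}$.

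For Step 2, substituting $f = r^{-(n-2)/2} \psi$, expanding $|\nabla f|_g^2$, and integrating by parts against $r^{-(n-2)}$ produces
\begin{align*}
\int_M |\nabla f|^2_g\, d\mu = \int_N |\tilde\nabla \psi|^2_{\tilde g}\, d\tilde\mu + \int_N \left[-\tfrac{n(n-2)}{4} |\nabla r|_g^2 + \tfrac{n-2}{2}\, r\,\Delta_M r \right] \psi^2\, d\tilde\mu,
\end{align*}
using $|\tilde\nabla \psi|^2_{\tilde g}\, d\tilde\mu = r^{-(n-2)} |\nabla \psi|^2_g\, d\mu$. Since $\Delta_M |x|^2 = 2n + 2\,\bangle{x, \vec H}$ and $|\vec H| \leq \delta_n |A|$, one rewrites $r\,\Delta_M r = n - |\nabla r|_g^2 + \bangle{x, \vec H}$ and bounds the $\vec H$-error by Young's inequality, absorbing it into the $|A|^2$ contribution from Step 1. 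Collecting surviving constants, using $|\nabla r|_g \leq 1$, and applying the standard conformal-change formulas relating $\biRic_\alpha$ of $g$ and of $\tilde g$, yields the announced lower bound $\tau_n - \eta_n\,\tilde\lambda_{\biRic_\alpha}$.

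The hard part will be bookkeeping: the precise constants $\tfrac{375}{112}, \tfrac{21}{8}, \tfrac{25}{28}$ for $n = 5$ will only emerge after simultaneously optimizing three free parameters — the weight $\alpha$ in $\biRic_\alpha$, the Cauchy-Schwarz parameter in Step 1 absorbing the $H$-error, and the Young parameter in Step 2 absorbing the $\vec H$-error — and then verifying that $\tau_n > 0$ at the prescribed pinching $\epsilon_4 = 3/20$ and $\epsilon_5 = 1/1000$. The classical limits $\tau_4 \to 3/4$, $\eta_4 \to 1$ (as $\epsilon_4 \to 0$) and $\tau_5 \to 81/112$, $\eta_5 \to 25/28$ (as $\epsilon_5 \to 0$) will serve as a sanity check that the bookkeeping reduces to the known minimal-case inequalities.
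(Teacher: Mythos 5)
Your plan is essentially the paper's own proof: the same Gulliver--Lawson conformal change $\tilde g=r^{-2}g$ with the test-function ansatz $f=r^{(2-n)/2}\varphi$ in the $F$-stability inequality, the same identity $r\Delta_M r=n-|\nabla r|^2+\langle \vec x,\vec H\rangle r$ handled by Young's inequality together with the pinching bound $H^2\le\delta_n^2|A|^2$, and the same CLMS/Mazet-style pointwise estimate for the conformal $\alpha$-bi-Ricci via the Gauss equation, the conformal-change formula, and parameter optimization, with the case split $n=4,5$ and the same limiting constants as sanity check. Two bookkeeping corrections if you flesh it out: the $\epsilon_n^2$-corrections ($c_4=6$, $c_5=12$) actually arise in your Step 2 from the $\langle\vec H,\vec x\rangle r$ term (Young plus the lemma), not from the Gauss-equation algebra in Step 1, where $H$ instead enters exactly as part of the sign-indefinite linear terms $\langle\vec x,\nu\rangle\bigl((n-2-\alpha)A_{11}+\alpha(n-3)A_{22}+(1+\alpha)H\bigr)$ produced by the conformal change of $\biRic_\alpha$; absorbing those requires completing the square against a fraction $a$ of $r^2|A|^2$ (the paper's matrix $B$ with $a=1,\alpha=1$ for $n=4$ and $a=\tfrac{28}{25},\alpha=\tfrac34$ for $n=5$), and this parameter $a$ -- not listed among your three -- is the one that, together with $\alpha$, actually determines $\tau_5,\eta_5$.
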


Let $\overrightarrow{H}=-H\nu$, $\overrightarrow{x}=\frac{\overrightarrow{X}}{|\overrightarrow{X}|}$, where $\overrightarrow{X}$ is the position vector field in $\bR^{n+1}$.
We first recall several formulas under the conformal deformation, which have been given in \cite{Chodosh-Li-Minter-Stryker-5bernstein, Mazet-6Bernstein}.
 \begin{prop}\label{conf-indentity}
   Let $(M^n,g)$ be a complete two-sided, stable $F$-minimal hypersurface in $\bR^{n+1}$, then the following equalities hold:\\
$(i)$. $$\tilde{\Delta}(\log r)=n-n|dr|^2+\small\bangle{\overrightarrow{H},\overrightarrow{x}}r,$$
$(ii)$.
$$r^{2}\Rm(e_{i},e_{j},e_{j},e_{i})={\tilde{\Rm}}(e_{i},e_{j},e_{j},e_{i})-2+|dr|^2+(dr(e_{i}))^2+(dr(e_{j}))^2+\bangle{\overrightarrow{x},\nu}(A_{ii}+A_{jj}).$$
 \end{prop}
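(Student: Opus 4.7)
Both identities are direct Riemannian-geometric computations, obtained by combining (a) the standard conformal-change formulas for the Laplacian and for the $(0,4)$-Riemann tensor under $\tilde g = e^{2\omega} g$ with $\omega = -\log r$, with (b) the submanifold identities that express tangential derivatives of the ambient function $r = |X|$ restricted to $M \hookrightarrow \bR^{n+1}$.

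\emph{Identity (i).} The conformal-change formula for the Laplacian in dimension $n$ reads $\tilde\Delta f = r^2\bigl(\Delta_M f + (n-2)\bangle{\nabla\log r, \nabla f}_g\bigr)$; applied to $f = \log r$ this reduces (i) to computing $\Delta_M r$ and $|\nabla r|^2$ on $M$. Standard submanifold calculus gives $\nabla^M r = X^T / r$, hence $|\nabla r|^2 = 1 - \bangle{x,\nu}^2$. For the Laplacian, the identity $\Delta_M X = \vec H$ (valid for any isometric immersion into Euclidean space, with $\vec H = -H\nu$ as in the paper) yields $\Delta_M |X|^2 = 2n + 2\bangle{\vec H, X}$, and therefore $\Delta_M r = (n - |\nabla r|^2)/r + \bangle{\vec H, x}$. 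Plugging this into the conformal-Laplacian formula and simplifying gives (i).

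\emph{Identity (ii).} The Kulkarni--Nomizu form of the conformal Riemann-change formula gives, for $e_i, e_j$ orthonormal in $g$,
\[
\tilde{\Rm}(e_i, e_j, e_j, e_i) = e^{2\omega}\bigl(\Rm(e_i, e_j, e_j, e_i) - E(e_i, e_i) - E(e_j, e_j)\bigr),
\]
where $E = \Hess^M \omega - d\omega \otimes d\omega + \tfrac{1}{2} |\nabla \omega|^2 g$. I would compute $\Hess^M(\log r)$ using the Gauss-type relation $\Hess^M f(X, Y) = \Hess^{\bR^{n+1}} f(X, Y) + A(X, Y)\cdot\partial_\nu f$ applied to $r$, obtaining
\[
\Hess^M(\log r)(e_i, e_j) = \frac{\delta_{ij} - 2\, dr(e_i)\, dr(e_j)}{r^2} + \frac{A_{ij} \bangle{x,\nu}}{r}.
\]
Substituting into $E$, summing the $i$ and $j$ contributions, multiplying by $e^{2\omega} = r^{-2}$, and rearranging produces precisely the right-hand side of (ii).

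The only real obstacle is convention-tracking: the paper evaluates $\tilde\Rm$ on vectors orthonormal in $g$ (not in $\tilde g$), the $(0,4)$-Riemann tensor scales nontrivially under conformal change, and the $\bangle{x,\nu}(A_{ii}+A_{jj})$ contribution in (ii) appears without any explicit power of $r$ --- so one must carefully track the $r^2$ factor on the left of (ii) and verify that the various $r^{-1}$ and $r^{-2}$ contributions combine correctly. Once these conventions are pinned down, both identities reduce to collecting like terms, with no further ingredient beyond elementary conformal geometry required.
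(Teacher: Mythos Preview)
The paper does not actually supply a proof of this proposition: it is stated as a recalled fact with the sentence ``We first recall several formulas under the conformal deformation, which have been given in \cite{Chodosh-Li-Minter-Stryker-5bernstein, Mazet-6Bernstein},'' and no argument follows. Your outline is therefore not competing against any in-paper proof, and the derivation you sketch---conformal-change formulas for $\tilde\Delta$ and $\tilde\Rm$ combined with the ambient-versus-intrinsic Hessian/Laplacian identities for $r=|X|$ restricted to $M$---is exactly the standard computation carried out in those references. One small correction: with $\tilde g=e^{2\omega}g$ and $\omega=-\log r$ one has $\tilde\Delta f=r^{2}\bigl(\Delta_M f+(n-2)\bangle{\nabla\omega,\nabla f}\bigr)=r^{2}\bigl(\Delta_M f-(n-2)\bangle{\nabla\log r,\nabla f}\bigr)$, so the sign in your displayed conformal-Laplacian formula is flipped; once that is fixed your computation of (i) goes through verbatim. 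Your diagnosis of the only genuine subtlety---tracking whether $\tilde\Rm$ is being evaluated on $g$- or $\tilde g$-orthonormal vectors, and how the resulting powers of $r$ interact with the $\bangle{\overrightarrow{x},\nu}(A_{ii}+A_{jj})$ term---is accurate and is indeed where care is needed to match the paper's precise normalization.
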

The $\alpha$-bi-Ricci curvature is defined as:
$$\biRic_{\alpha}(e_{1},e_{2})=\sum\limits_{i=2}^{n}\Rm(e_{1},e_{i},e_{i},e_{1})+\alpha\sum\limits_{j=3}^{n}\Rm(e_{2},e_{j},e_{j},e_{2}),$$
when $\alpha=1$, it's the bi-Ricci curvature, which was first introduced by Shen-Ye in \cite{Shen-Ye96}.
Using the Gauss equation and Proposition \ref{conf-indentity} (ii), we have expressions for the $\alpha$-bi-Ricci curvatures under the two different metrics.
\begin{prop}\label{pro-2}
   \begin{align*}
   {\biRic}_{\alpha}(e_{1},e_{2})&=\sum\limits_{i=2}^{n}(A_{ii}A_{11}-A_{i1}^2)+\alpha\sum\limits_{j=3}^{n}(A_{22}A_{jj}-A_{2j}^2).\\
   \widetilde{\biRic}_{\alpha}(\tilde{e}_{1},\tilde{e}_{2})
 &=r^2(\sum\limits_{i=2}^{n}(A_{11}A_{ii}-A_{1i}^2)+\alpha\sum\limits_{j=3}^{n}(A_{22}A_{jj}-A_{2j}^2))\\
 &\quad+(2(n-1)+\alpha(n-2))-(n+\alpha(n-1))|dr|^2 \\
&\quad-((n-2-\alpha)dr(e_{1})^2+\alpha(n-3)dr(e_{2})^2)\\
&-\bangle{\overrightarrow{x},\nu}((n-2-\alpha)A_{11}+\alpha(n-3)A_{22}+(1+\alpha)H).
 \end{align*}
 \end{prop}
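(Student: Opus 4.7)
The proposition consists of two pointwise algebraic identities for the $\alpha$-bi-Ricci curvatures of the two conformally related metrics $g$ and $\tilde{g}=r^{-2}g$. Neither identity requires the stability hypothesis or the pinching condition \eqref{cond-1}: the first is a direct consequence of the Gauss equation for a hypersurface in flat space, and the second follows by combining Proposition \ref{conf-indentity}(ii) with the first.

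For the first identity, since the ambient space $\bR^{n+1}$ is flat, the Gauss equation reads
$$\Rm(X,Y,Y,X)=A(X,X)A(Y,Y)-A(X,Y)^2$$
for any tangent vectors $X,Y$ on $M$. Evaluating on the $g$-orthonormal frame $\{e_i\}$ yields $\Rm(e_1,e_i,e_i,e_1)=A_{11}A_{ii}-A_{1i}^2$ and $\Rm(e_2,e_j,e_j,e_2)=A_{22}A_{jj}-A_{2j}^2$. Substituting into the definition of $\biRic_\alpha(e_1,e_2)=\sum_{i\ge 2}\Rm(e_1,e_i,e_i,e_1)+\alpha\sum_{j\ge 3}\Rm(e_2,e_j,e_j,e_2)$ yields the claimed formula immediately.

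For the second identity, my plan is to apply Proposition \ref{conf-indentity}(ii) term by term to each sectional curvature appearing in the definition of $\widetilde{\biRic}_\alpha(\tilde e_1,\tilde e_2)$: rewrite each $\tilde\Rm(e_1,e_i,e_i,e_1)$ for $i\ge 2$ and each $\tilde\Rm(e_2,e_j,e_j,e_2)$ for $j\ge 3$ as $r^2\Rm(\cdot,\cdot,\cdot,\cdot)$ plus the explicit correction
$$2-|dr|^2-dr(e_1)^2-dr(e_i)^2-\bangle{\overrightarrow{x},\nu}(A_{11}+A_{ii}),$$
and analogously for the $(e_2,e_j)$ terms. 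Summing with weights $1$ and $\alpha$, the curvature pieces assemble into $r^2\biRic_\alpha(e_1,e_2)$, which by the first identity is exactly the leading term in the stated expression. For the remaining constant, the $|dr|^2$ and $dr(e_k)^2$ pieces, and the $\bangle{\overrightarrow{x},\nu}$-contractions, I would use the identities $\sum_{k=1}^n dr(e_k)^2=|dr|^2$ and $\sum_{k=1}^n A_{kk}=H$ (together with $\sum_{i\ge 2}dr(e_i)^2=|dr|^2-dr(e_1)^2$ and similarly for $j\ge 3$) to consolidate the index-specific corrections into global quantities involving $|dr|^2$, $dr(e_1)^2$, $dr(e_2)^2$, $H$, $A_{11}$, and $A_{22}$.

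The main obstacle is not conceptual but purely combinatorial: correctly matching the prefactors of each term, namely the $|dr|^2$ coefficient $n+\alpha(n-1)$, the separate coefficients $n-2-\alpha$ and $\alpha(n-3)$ of $dr(e_1)^2$ and $dr(e_2)^2$, and the coefficients $n-2-\alpha$, $\alpha(n-3)$, $1+\alpha$ multiplying $A_{11}$, $A_{22}$, $H$ respectively inside the $\bangle{\overrightarrow{x},\nu}$-factor. This is a careful but routine bookkeeping exercise; no further geometric input beyond the Gauss equation and Proposition \ref{conf-indentity}(ii) is required.
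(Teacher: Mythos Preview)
Your approach is correct and matches the paper's own (implicit) proof: the paper simply states that Proposition~\ref{pro-2} follows from the Gauss equation together with Proposition~\ref{conf-indentity}(ii), which is precisely the two-step argument you outline. The bookkeeping you describe—summing the correction terms over $i\ge 2$ and $j\ge 3$ and using $\sum_k dr(e_k)^2=|dr|^2$, $\sum_k A_{kk}=H$—is exactly what produces the stated coefficients.
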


We select the basis vector $\tilde{e}_1$ and $\tilde{e}_2$ such that $\widetilde{\biRic_{\alpha}}(\tilde{e}_{1},\tilde{e}_2)=\tilde{\lambda}_{\biRic_{\alpha}}$.
Assume $\alpha\leq 1$ and $a\in \bR$ satisfy $a\geq\frac1 2$ and $a\geq2\alpha$. Then
 \begin{align*}
&ar^2|A|^2+r^2(\sum\limits_{i=2}^{n}(A_{11}A_{ii}-A_{1i}^2)+\alpha\sum\limits_{j=3}^{n}(A_{22}A_{jj}-A_{2j}^2))\\
 &\quad-\bangle{\overrightarrow{x},\nu}((n-2-\alpha)A_{11}+\alpha(n-3)A_{22}+(1+\alpha)H)\\
\geq & a r^2\sum\limits_{i=1}^{n}A_{ii}^2+r^2(\sum\limits_{i=2}^{n}A_{11}A_{ii}+\alpha\sum\limits_{j=3}^{n}A_{22}A_{jj})\\
 &\quad-\bangle{\overrightarrow{x},\nu}((n-2-\alpha)A_{11}+\alpha(n-3)A_{22}+(1+\alpha)H)\\
 :=&q^{T}Bq+C^Tq,
 \end{align*}
where
 \begin{equation*}
 B=
 \begin{pmatrix}
   a & \frac{1}{2} &  \frac{1}{2} &  \cdots &  \frac{1}{2} \\
    \frac{1}{2} & a & \frac{\alpha}{2} & \cdots & \frac{\alpha}{2} \\
    \frac{1}{2} & \frac{\alpha}{2} & a & \cdots & 0 \\
   \vdots & \vdots & \vdots & \ddots & \vdots \\
    \frac{1}{2} & \frac{\alpha}{2} & 0 & \cdots & a\\
 \end{pmatrix},
 \quad q=r
 \begin{pmatrix}
   A_{11} \\
   A_{22} \\
   A_{33}\\
   \vdots \\
   A_{nn}\\
 \end{pmatrix},
\end{equation*}
and
 \begin{equation*}
   \quad C=-\bangle{\frac{\overrightarrow{x}}{r},\nu}
   \begin{pmatrix}
     n-1 \\
     \alpha(n-2)+1 \\
    1+\alpha \\
    \vdots \\
    1+\alpha
   \end{pmatrix}=-\bangle{\frac{\overrightarrow{x}}{r},\nu}\bar{C}.
 \end{equation*}
 If $B$ is positive definite, by using the inequality
  \begin{align*}
 q^{T}Bq+C^{T}q\geq -\frac{1}{4}C^{T}B^{-1}C,
 \end{align*} we have
 \begin{align*}
&ar^2|A|^2+r^2(\sum\limits_{i=2}^{n}(A_{11}A_{ii}-A_{1i}^2)+\alpha\sum\limits_{j=3}^{n}(A_{22}A_{jj}-A_{2j}^2))\\
 &\quad-\bangle{\overrightarrow{x},\nu}((n-2-\alpha)A_{11}+\alpha(n-3)A_{22}+(1+\alpha)H)\\
&\geq-b\bangle{\frac{\overrightarrow{x}}{r},\nu}^2,
 \end{align*}
 where  $b:=\frac{1}{4}\bar{C}^{T}B^{-1}\bar{C}$.
 Therefore, since we choose $\alpha\le 1$,
 \begin{align*}
   ar^2|A|^2+\widetilde{\biRic}_{\alpha}(\tilde{e}_{1},\tilde{e}_{2}) &\geq-b\bangle{\frac{\overrightarrow{x}}{r},\nu}^2+(2(n-1)+\alpha(n-2))-(n+\alpha(n-1))|dr|^2 \\
&\quad-((n-2-\alpha)dr(e_{1})^2+\alpha(n-3)dr(e_{2})^2)\\
&\geq-b\bangle{\frac{\overrightarrow{x}}{r},\nu}^2+(2(n-1)+\alpha(n-2))-(n+\alpha(n-1))|dr|^2 \\
&\quad-(n-2-\alpha)|dr|^2+(n-2-\alpha(n-2))dr(e_{2})^2\\
&\geq-b(1-|dr|^2)+2(n-1+\alpha(n-2))-(2n-2+\alpha(n-2))|dr|^2.\\
 \end{align*}
We reformulate the expression as follows:
\begin{prop}\label{pro-3}
  Let $n=4, 5$, and $a\geq\frac{1}{2}$, $a\geq2\alpha$, $\alpha\leq 1$. If $B$ is a positive definite matrix, then the following inequality holds:
  $$r^2|A|^2\geq-\frac{b}{a}(1-|dr|^2)+2(n-1+\alpha(n-2))-\frac{1}{a}(2n-2+\alpha(n-2))|dr|^2-\frac{1}{a}\tilde{\lambda}_{\biRic_\alpha}.$$
\end{prop}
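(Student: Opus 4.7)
The plan is to derive the spectral inequality on $(N,\tilde g)$ from the $F$-stability inequality on $(M,g)$ via a conformal test function, absorbing the anisotropic mean-curvature error using the pinching bound $H^2 \le \delta_n^2|A|^2$, and then invoking Proposition \ref{pro-3} to trade $r^2|A|^2$ for $\tilde\lambda_{\biRic_\alpha}$.

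\emph{Step 1 (Conformal test function).} First substitute $f = r^{-(n-2)/2}\varphi$, for $\varphi \in C_0^1(N)$, into the stability inequality $\tfrac{1}{1+\epsilon_n}\int_M |A|^2 f^2\,d\mu \le \int_M |\nabla f|^2\,d\mu$. The conformal identities $d\mu = r^n d\tilde\mu$, $|\tilde\nabla u|^2_{\tilde g} = r^2|\nabla u|^2_g$ and $r\bangle{\nabla r, \nabla\varphi}_g = \bangle{\tilde\nabla \log r, \tilde\nabla\varphi}_{\tilde g}$ turn the LHS into $\tfrac{1}{1+\epsilon_n}\int_N r^2|A|^2\varphi^2\,d\tilde\mu$. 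Expanding $|\nabla f|^2$ produces a cross term $-(n-2)\int_N \varphi\bangle{\tilde\nabla\log r, \tilde\nabla\varphi}_{\tilde g}\,d\tilde\mu$; I integrate this by parts and substitute Proposition \ref{conf-indentity}(i), $\tilde\Delta\log r = n(1-|dr|^2) + \bangle{\vec H,\vec x}r$, to arrive at
$$\tfrac{1}{1+\epsilon_n}\int_N r^2|A|^2\varphi^2\,d\tilde\mu \le \int_N |\tilde\nabla\varphi|^2_{\tilde g}\,d\tilde\mu + \int_N\Big(\tfrac{n(n-2)}{2} - \tfrac{n^2-4}{4}|dr|^2 + \tfrac{n-2}{2}\bangle{\vec H,\vec x}r\Big)\varphi^2\,d\tilde\mu.$$

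\emph{Step 2 (Absorption and Proposition \ref{pro-3}).} The term $\bangle{\vec H,\vec x}r$ is the genuinely anisotropic obstacle: it vanishes in the classical minimal case. Combining $|\bangle{\vec H,\vec x}| \le |H| \le \delta_n|A|$ with Young's inequality gives $\tfrac{n-2}{2}|H|r \le \tfrac{s}{2}r^2|A|^2 + \tfrac{(n-2)^2\delta_n^2}{8s}$ for any $s>0$; transferring the $r^2|A|^2$-piece to the LHS reduces its coefficient to $\kappa := \tfrac{1}{1+\epsilon_n} - \tfrac{s}{2}$. I then apply Proposition \ref{pro-3} with a choice of parameters $(\alpha, a)$ making $B$ positive definite, lower-bounding $r^2|A|^2$ pointwise by a combination of constants, $|dr|^2$-terms, and $-\tfrac{1}{a}\tilde\lambda_{\biRic_\alpha}$. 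Collecting terms yields
$$\int_N |\tilde\nabla\varphi|^2_{\tilde g}\,d\tilde\mu \ge \int_N\bigl(C_0 + C_1|dr|^2 - \eta_n\tilde\lambda_{\biRic_\alpha}\bigr)\varphi^2\,d\tilde\mu,$$
with $\eta_n = \kappa/a$ and explicit $C_0, C_1$ depending on $n, \alpha, a, s$ and $b := \tfrac14\bar C^T B^{-1}\bar C$. The pointwise bound $|dr|^2 \in [0,1]$ (set $|dr|^2 = 1$ where $C_1 < 0$, drop otherwise) then collapses the $|dr|^2$-term and produces the constant $\tau_n$.

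\emph{Main obstacle.} The decisive task is calibrating the triple $(s, \alpha, a)$ so that $\kappa/a$ equals the stated $\eta_n = \tfrac{1}{a(1+\epsilon_n)} - \tfrac{(n-2)\delta_n^2}{a}$ and $C_0$ (or $C_0 + C_1$) equals the stated $\tau_n$. Matching $\eta_n$ forces $s = 2(n-2)\delta_n^2$, with $a = 1$ for $n = 4$ and $a = 28/25$ for $n = 5$; the choice of $\alpha$ (subject to $\alpha \le 1$ and $2\alpha \le a$) is then tuned to optimize the $b$-dependent contribution to $\tau_n$. Concretely, this requires verifying positive definiteness of the $n \times n$ matrix $B$, symbolically inverting it to evaluate $b$, and checking the sign of $C_1$ so that the correct endpoint of $|dr|^2 \in [0,1]$ is used. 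This last algebraic bookkeeping, sensitive to the specific dimension, is the only genuinely technical step.
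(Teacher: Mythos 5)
Your proposal does not address Proposition \ref{pro-3}: it sketches a derivation of the \emph{spectral} inequality of Proposition \ref{low-bound} (conformal test function $f=r^{(2-n)/2}\varphi$, integration by parts against $\tilde\Delta\log r$, absorption of $\bangle{\overrightarrow{H},\overrightarrow{x}}r$ via $H^2\le\delta_n^2|A|^2$), and in Step 2 it explicitly \emph{invokes} Proposition \ref{pro-3} to convert $r^2|A|^2$ into $\tilde{\lambda}_{\biRic_\alpha}$. As a proof of Proposition \ref{pro-3} this is circular, and moreover the machinery you deploy is irrelevant to it: the statement to be proved is a purely pointwise, algebraic lower bound that uses no stability, no test functions, and no integration --- it is the step that makes your Step 2 legitimate, not a consequence of it.

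What is actually needed is the following chain, none of which appears in your write-up. (i) Evaluate the conformal-change formula of Proposition \ref{pro-2} at a frame $(\tilde e_1,\tilde e_2)$ realizing $\widetilde{\biRic}_\alpha(\tilde e_1,\tilde e_2)=\tilde{\lambda}_{\biRic_\alpha}$. (ii) Add $a\,r^2|A|^2$ to the curvature terms and discard the off-diagonal entries of $A$; the hypotheses $a\ge\tfrac12$ and $a\ge 2\alpha$ are exactly what lets $a\sum_{i\ne j}A_{ij}^2$ dominate $\sum_i A_{1i}^2+\alpha\sum_j A_{2j}^2$, leaving a quadratic expression $q^TBq+C^Tq$ in the diagonal entries $q=r(A_{11},\dots,A_{nn})^T$, with $C$ proportional to $\bangle{\overrightarrow{x},\nu}\,\bar C$. (iii) Use positive definiteness of $B$ to complete the square, $q^TBq+C^Tq\ge-\tfrac14C^TB^{-1}C=-b\,\bangle{\overrightarrow{x},\nu}^2$, and then the pointwise identity $\bangle{\overrightarrow{x},\nu}^2=1-|dr|^2$ (decomposition of the unit radial vector into tangential and normal parts) together with $dr(e_1)^2+dr(e_2)^2\le|dr|^2$ and $\alpha\le1$ to reduce the remaining $|dr|^2$-bookkeeping to the stated coefficients. (iv) Divide by $a$. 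Your discussion of calibrating $(s,\alpha,a)$, checking positive definiteness of $B$, and computing $b=\tfrac14\bar C^TB^{-1}\bar C$ shows you understand where these objects are used downstream, but the proposal never establishes the inequality itself, so the proof is missing in its entirety.
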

We now adapt the estimate for $|A|^2$ from Proposition \ref{pro-3} to give a characterization of $F$-stability in terms of the $\alpha$-bi-Ricci curvature under the conformal deformation.

\begin{proof}[Proof of Proposition \ref{low-bound}]
On the one hand, under the conformal metric $\tilde{g}$, we have
  $$d\tilde{\mu}=r^{-n}d\mu \quad and \quad |\tilde{\nabla} f|^2_{\tilde{g}}=r^2|\nabla f|^2.$$
The $F$-stability inequality yields
   $$\int_{N}r^{n-2}|\tilde{\nabla} f|^2_{\tilde{g}}d\tilde{\mu}\geq\frac{1}{1+\epsilon_{n}}\int_{N}r^{n-2}(r^2|A|^2)f^2d\tilde{\mu},$$\
   for any $f\in C^{1}_{0}(N,\tilde{g})$. Take $f=r^{\frac{2-n}{2}}\varphi$ for $\varphi\in C^{1}_{0}(N,\tilde{g})$, then
   $$\tilde{\nabla} f=r^{\frac{2-n}{2}}\tilde{\nabla}\varphi-\frac{n-2}{2}r^{-\frac{n}{2}}\varphi\tilde{\nabla}r,$$
and
   $$|\tilde{\nabla f}|^2_{\tilde{g}}=r^{2-n}|\tilde{\nabla} \varphi|^2_{\tilde{g}}+\frac{(n-2)^2}{4}r^{-n}\varphi^2|\tilde{\nabla} r|^2_{\tilde{g}}-(n-2)r^{1-n}\varphi\bangle{\tilde{\nabla}\varphi,\tilde{\nabla}r}_{\tilde{g}}=:L_{1}+L_{2}+L_{3}.$$
   We obtain $$\int_{N}r^{n-2}L_{1}d\tilde{\mu}=\int_{N}|\tilde{\nabla}\varphi|^2_{\tilde{g}}d\tilde{\mu},$$
   and
   $$\int_{N}r^{n-2}L_{2}d\tilde{\mu}=\int_{N}\frac{(n-2)^2}{4}|dr|^2\varphi^2d\tilde{\mu},$$
    where we used the fact that $r^{-2}|\tilde{\nabla} r|^2_{\tilde{g}}=|dr|^2$.
   Integrating by parts, the third term yields
   \begin{align*}
     \int_{N}r^{n-2}L_{3}d\tilde{\mu}&=-\int_{N}\frac{n-2}{2}\bangle{\tilde{\nabla}(\varphi^2),\tilde{\nabla}(\log r)}_{\tilde{g}}d\tilde{\mu}\\
     &=\int_{N}\frac{n-2}{2}\tilde{\Delta}(\log r)\varphi^2d\tilde{\mu}\\
     &=\int_{N}(\frac{n(n-2)}{2}-\frac{n(n-2)}{2}|dr|^2+\frac{n-2}{2}\bangle{\overrightarrow{H},\overrightarrow{x}}r)\varphi^2d\tilde{\mu}.
    \end{align*}
    Put them together, we obtain
    \begin{align*}
      \int_{N}|\tilde{\nabla} \varphi|^2_{\tilde{g}}d\tilde{\mu}&\geq \int_{N}\bigg(\frac{1}{1+\epsilon_{n}} r^2|A|^2-\frac{n(n-2)}{2}+\frac{(n-2)(n+2)}{4}|dr|^2-\frac{n-2}{2}\bangle{\overrightarrow{H},\overrightarrow{x}}r\bigg)\varphi^2d\tilde{\mu}\\
      &\geq\int_{N}\bigg(\frac{1}{1+\epsilon_{n}} r^2|A|^2-\frac{n(n-2)}{2}+\frac{(n-2)(n+2)}{4}|dr|^2-(n-2)H^2r^2-\frac{n-2}{8}\bigg)\varphi^2d\tilde{\mu}\\
      &\geq\int_{N}\bigg((\frac{(1+\epsilon_{n})-(n-2)(n-1)\epsilon_{n}^2}{(1+\epsilon_{n})^2})r^2|A|^2+\frac{(n-2)(n+2)}{4}|dr|^2-\frac{(4n+1)(n-2)}{8}\bigg)\varphi^2d\tilde{\mu}.
    \end{align*}

 On the other hand, combining with Proposition \ref{pro-3}, we consider the following two cases:

\textbf{Case 1.} For $n=4$, $1+\epsilon_{4}=\frac{23}{20}$, choosing $a=\alpha=1$, by software Mathematica, we can easily check $B$ is positive definite and $b=3$, then the above inequality yields:
\begin{align*}
&\frac{1+\epsilon_{4}-6\epsilon_{4}^2}{(1+\epsilon_{4})^2}r^2|A|^2-\frac{17}{4}+3|dr|^2\\
&\geq7\cdot\frac{1+\epsilon_{4}-6\epsilon_{4}^2}{(1+\epsilon_{4})^2}-\frac{17}{4}+\big(3-5\cdot\frac{1+\epsilon_{4}-6\epsilon_{4}^2}{(1+\epsilon_{4})^2}\big)|dr|^2-\frac{1+\epsilon_{4}-6\epsilon_{4}^2}{(1+\epsilon_{4})^2}\tilde{\lambda}_{\biRic_\alpha}.\\
 &\geq 2\cdot\frac{1+\epsilon_{4}-6\epsilon_{4}^2}{(1+\epsilon_{4})^2}-\frac{5}{4}-\frac{1+\epsilon_{4}-6\epsilon_{4}^2}{(1+\epsilon_{4})^2}\tilde{\lambda}_{\biRic_\alpha}\\
 &:=\tau_{4}-\eta_{4}\tilde{\lambda}_{\biRic_\alpha},
\end{align*}
where $\tau_{4}\approx 0.285$, $\eta_{4}\approx 0.7675$. In the second inequality, we used the fact that $$3-5\cdot\frac{1+\epsilon_{4}-6\epsilon_{4}^2}{(1+\epsilon_{4})^2}\approx-0.8374<0$$ and
$|dr|\le 1$.

\textbf{Case 2.} For $n=5$, $1+\epsilon_{5}=\frac{1001}{1000}$,
if we choose $a=\frac{28}{25}$, $\alpha=\frac{3}{4}$, then $b=4.3307$ and $B$ is positive definite, the above inequality leads to:
\begin{align*}
&\frac{1+\epsilon_{5}-12\epsilon_{5}^2}{(1+\epsilon_{5})^2}r^2|A|^2-\frac{63}{8}+\frac{21}{4}|dr|^2\\
&\geq\frac{1+\epsilon_{5}-12\epsilon_{5}^2}{(1+\epsilon_{5})^2}(-\frac{b}{a}+8+6\alpha)-\frac{63}{8}+\bigg(\frac{1+\epsilon_{5}-12\epsilon_{5}^2}{(1+\epsilon_{5})^2}(\frac{b}{a}-\frac{1}{a}(8+3\alpha))+\frac{21}{4}\bigg)|dr|^2\\
 &\quad-\frac{1}{a}\cdot\frac{1+\epsilon_{5}-12\epsilon_{5}^2}{(1+\epsilon_{5})^2}\tilde{\lambda}_{\biRic_{\alpha}}\\
&\geq\frac{375}{112}\cdot\frac{1+\epsilon_{5}-12\epsilon_{5}^2}{(1+\epsilon_{5})^2}-\frac{21}{8}-\frac{25}{28}\cdot\frac{1+\epsilon_{5}-12\epsilon_{5}^2}{(1+\epsilon_{5})^2}\tilde{\lambda}_{\biRic_{\alpha}}\\
 &:= \tau_{5}-\eta_{5}\widetilde{\lambda}_{\biRic_{\alpha}},
 \end{align*}
 where $\tau_{5}\approx 0.71657$, $\eta_{5}\approx 0.8911$.  In the second inequality, we used the fact that $$\frac{1+\epsilon_{5}-12\epsilon_{5}^2}{(1+\epsilon_{5})^2}(\frac{b}{a}-\frac{1}{a}(8+3\alpha))+\frac{21}{4}\approx-0.0297<0$$ and
$|dr|\le 1$.
\end{proof}

\section{$\mu$-bubbles and proof of Theorem 2}
In this section, under the condition of spectral uniformly positivity of the $\alpha$-bi-Ricci curvature on $(N,\tilde{g})$, we construct a warped $\mu$-bubble whose volume has an upper bound. We follow closely Chodosh-Li-Minter-Stryker \cite{Chodosh-Li-Minter-Stryker-5bernstein} and Mazet \cite{Mazet-6Bernstein}.

First, by \cite[Theorem 1]{FCS80}, there exists a positive function $w$ on $(N,\tilde{g})$ satisfying
\begin{align}\label{JACfun}
-\tilde{\Delta}w=(\tau_{n}-\eta_{n}\tilde{\lambda}_{\biRic_{\alpha}})w.
\end{align}
We use $w$ for the weight function in $\cA_{k}(\Omega)$ as below.

\subsection{Warped $\mu$-bubbles}
Given an $n$-dimensional compact Riemannian manifold $(M_{0},\tilde{g})$ with boundary consisting of two disjoint components, denoted by $\partial M_{0}=\partial_{+}M_{0}\cup\partial_{-}M_{0}$, where neither of them are empty. Let $h$ be a smooth function on $\mathring{M_{0}}$(interior of $M_0$) with $h\to \pm\infty$ on $\partial_{\pm}M_{0}$, choose a finite perimeter set $\Omega_{0}$ with $\partial\Omega_{0}\subset\mathring{M_{0}}$ and $\partial_{+}M_{0}\subset\Omega_{0}$. We consider the following warped prescribed mean curvature functional $\cA_{k}$ defined by $$\cA_{k}({\Omega})=\int_{\partial^{*} \Omega}w^{k}d\tilde{\mu}^{n-1}-\int_{M_{0}}(\chi_{\Omega}-\chi_{\Omega_{0}})hw^{k}d\tilde{\mu}^{n}.$$

If there exists a finite perimeter set $\Omega$ minimizing $\cA_{k}$ among the Caccioppoli sets containing $\partial_{+}M_{0}$, we call $\Sigma=\partial^{*}\Omega$ a warped $\mu$-bubble, where $\partial^{*}\Omega$ denotes the reduced boundary of $\Omega$.  Let $\tilde{\nu}$ be the outward unit normal vector field of $\Sigma$. We note that the $\mu$-bubble was first introduced by Gromov \cite{Gromov2019Fourlecture}, whose existence and regularity theory have been established in  \cite{general soap bubbles,zhujintian2021}.
The first and second variation formulas are as follows. We refer to \cite{Hong-cmc nonexis,Mazet-6Bernstein} for details.
\begin{prop}[first and second variation formula]
  Let $\Omega_{t}$ be a smooth family of sets of finite perimeter with $\Omega_{t}\big|_{t=0}=\Omega$ and with velocity $\psi\tilde{\nu}$ at $t=0$, then $$\frac{d}{dt}\big|_{t=0}\cA_{k}(\Omega_{t})=\int_{\Sigma}(\tilde{H}_\Sigma w^{k}+\bangle{\tilde{\nabla}_{M_0} w^{k},\tilde{\nu}}-hw^{k})\psi d\tilde{\mu}^{n-1},$$
  where $\tilde{H}_\Sigma$ is the mean curvature of $\Sigma$. In particular, the $\mu$-bubble $\Sigma$ satisfies $$\tilde{H}_\Sigma=-kw^{-1}\bangle{\tilde{\nabla}_{M_0}w,\tilde{\nu}}+h.$$Also,
\begin{align*}
  \frac{d^2}{dt^2}\bigg|_{t=0}\cA_{k}(\Omega)
&=\int_{\Sigma}w^{k}\bigg(|\tilde{\nabla}_{\Sigma}\psi|^2-(|\tilde{A}_{\Sigma}|^2+\tilde{\Ric}_{M_{0}}(\tilde{\nu},\tilde{\nu}))\psi^2-kw^{-2}\bangle{\tilde{\nabla}_{M_{0}} w,\tilde{\nu}}^2\psi^2\\
  &\quad+kw^{-1}(\tilde{\Delta}_{M_{0}}w-\tilde{\Delta}_{\Sigma}w-\tilde{H}_\Sigma\bangle{\tilde{\nabla}_{M_0} w,\tilde{\nu}})\psi^2+\bangle{\tilde{\nabla}_{M_0} h,\tilde{\nu}}\psi^2\bigg)d\tilde{\mu}^{n-1}.
  \end{align*}
\end{prop}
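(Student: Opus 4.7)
The plan is to compute each variation directly, using the decomposition of $\cA_k$ into a weighted perimeter $\int_{\partial^*\Omega}w^k\,d\tilde{\mu}^{n-1}$ and a weighted enclosed volume $-\int_{M_0}(\chi_\Omega - \chi_{\Omega_0})hw^k\,d\tilde{\mu}^n$. For the first variation under a normal deformation $\Omega_t$ with velocity $\psi\tilde{\nu}$ at $t=0$, I would apply two standard formulas: the weighted first-variation identity
$$\frac{d}{dt}\bigg|_{t=0}\int_{\Sigma_t}\phi\,d\tilde{\mu}^{n-1} = \int_\Sigma\bigl(\phi\,\tilde{H}_\Sigma + \tilde{\nu}(\phi)\bigr)\psi\,d\tilde{\mu}^{n-1}$$
with $\phi = w^k$, and the elementary identity $\frac{d}{dt}\big|_{t=0}\int_{\Omega_t} f\,d\tilde{\mu}^n = \int_\Sigma f\psi\,d\tilde{\mu}^{n-1}$ with $f = hw^k$. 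Summing yields the stated first-variation formula. Testing against arbitrary $\psi \in C^1_c(\mathring{M_0})$ at a minimizer then forces the pointwise prescribed mean curvature equation $\tilde{H}_\Sigma = -kw^{-1}\langle\tilde{\nabla}w,\tilde{\nu}\rangle + h$.

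For the second variation I would set
$$F(x,t) := \tilde{H}_{\Sigma_t}w^k + \langle\tilde{\nabla}w^k,\tilde{\nu}_t\rangle - hw^k$$
(pulled back along the flow), so that $\cA_k'(t) = \int_{\Sigma_t} F\psi\,d\tilde{\mu}_t^{n-1}$. Because $F|_{t=0}\equiv 0$ on $\Sigma$ by the first-order equation, the contribution from differentiating the area element disappears and one is left with $\cA_k''(0) = \int_\Sigma (\partial_t F)|_{t=0}\,\psi\,d\tilde{\mu}^{n-1}$. To evaluate $\partial_t F|_0$ I would combine three standard ingredients: (i) the Jacobi-operator formula $\partial_t\tilde{H}_{\Sigma_t}|_0 = -\tilde{\Delta}_\Sigma\psi - (|\tilde{A}_\Sigma|^2 + \tilde{\Ric}_{M_0}(\tilde{\nu},\tilde{\nu}))\psi$; (ii) the chain rule $\partial_t w^k|_0 = kw^{k-1}\psi\langle\tilde{\nabla}w,\tilde{\nu}\rangle$ along the flow; and (iii) $\partial_t\tilde{\nu}_t|_0 = -\tilde{\nabla}_\Sigma\psi$, contributing $-\langle\tilde{\nabla}_\Sigma w^k,\tilde{\nabla}_\Sigma\psi\rangle$ to $\partial_t\langle\tilde{\nabla}w^k,\tilde{\nu}_t\rangle$. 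A single integration by parts on the $-w^k\psi\tilde{\Delta}_\Sigma\psi$ term (boundary terms vanish because $\psi$ is compactly supported) produces the $|\tilde{\nabla}_\Sigma\psi|^2$ contribution.

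The last step is algebraic reorganization. I would apply the splitting
$$\tilde{\Delta}_{M_0}w = \tilde{\Delta}_\Sigma w + \tilde{\nabla}^2 w(\tilde{\nu},\tilde{\nu}) + \tilde{H}_\Sigma\langle\tilde{\nabla}w,\tilde{\nu}\rangle$$
to collect the Hessian-of-$w$ pieces into the advertised combination $kw^{-1}(\tilde{\Delta}_{M_0}w - \tilde{\Delta}_\Sigma w - \tilde{H}_\Sigma\langle\tilde{\nabla}w,\tilde{\nu}\rangle)\psi^2$, isolate the pure Jacobian term $-kw^{-2}\langle\tilde{\nabla}w,\tilde{\nu}\rangle^2\psi^2$ coming from differentiating $w^{-1}$ inside the coefficient of $\tilde{H}_{\Sigma_t}$, and recognize $\langle\tilde{\nabla}h,\tilde{\nu}\rangle\psi^2$ from the chain rule applied to $h$. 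The main obstacle is purely this bookkeeping: the Hessian of $w$ enters through two distinct mechanisms (differentiating $w^k$ along the flow, and differentiating the normal $\tilde{\nu}_t$), and the tangential-gradient pieces produced by the second mechanism must cancel exactly against those created by the integration by parts, leaving the clean divergence-style expression stated. Since this is the weighted analogue of the classical second variation and the unweighted case is standard, no new geometric input beyond the computations already employed in \cite{general soap bubbles,Mazet-6Bernstein} is needed.
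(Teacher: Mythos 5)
Your plan is the standard direct computation and is, in outline, what the sources the paper defers to (\cite{Hong-cmc nonexis,Mazet-6Bernstein}) carry out; the paper itself gives no proof of this proposition, so there is no genuinely different route to compare against. The reduction $\tfrac{d^2}{dt^2}\big|_{t=0}\cA_k=\int_\Sigma(\partial_t F)|_{t=0}\,\psi\,d\tilde{\mu}^{n-1}$ using $F\equiv 0$ on $\Sigma$, the three linearizations, and the single integration by parts are the right ingredients, and your sign for $\partial_t\tilde{H}_{\Sigma_t}$ is consistent with the first-variation convention you use. Two bookkeeping points should be made explicit: the Hessian of $w$ enters from moving the base point in $\bangle{\tilde{\nabla} w^k,\tilde{\nu}_t}$ (not from $\partial_t\tilde{\nu}_t$, which only produces the tangential term that cancels after the integration by parts), and the term $-kw^{-2}\bangle{\tilde{\nabla} w,\tilde{\nu}}^2\psi^2$ is not obtained by differentiation alone: you must insert the first-order equation $\tilde{H}_\Sigma-h=-kw^{-1}\bangle{\tilde{\nabla} w,\tilde{\nu}}$ (equivalently, factor $F=w^k\big(\tilde{H}_{\Sigma_t}+k\,\tilde{\nu}_t(\log w)-h\big)$ so that the $\partial_t(w^k)$ contribution is annihilated by criticality, and the $-kw^{-2}\bangle{\tilde{\nabla} w,\tilde{\nu}}^2$ piece comes from $\tilde{\nabla}^2\log w(\tilde{\nu},\tilde{\nu})$).

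The one concrete discrepancy: followed honestly, your chain rule gives $\partial_t(-hw^k)|_0=-\psi\bangle{\tilde{\nabla}_{M_0}h,\tilde{\nu}}w^k-h\,\partial_t(w^k)|_0$, so the formula your computation produces carries $-\bangle{\tilde{\nabla}_{M_0}h,\tilde{\nu}}\psi^2$, not the $+\bangle{\tilde{\nabla}_{M_0}h,\tilde{\nu}}\psi^2$ you assert. With the conventions fixed by the stated first variation ($\tilde{\nu}$ outward from $\Omega$, volume term entering with $-hw^k$), the minus sign is the correct one; check it on concentric spheres in flat $\bR^n$ with $w\equiv 1$ and radial $h$: $\cA(r)=|\bS^{n-1}|r^{n-1}-\int_0^r h(s)|\bS^{n-1}|s^{n-1}\,ds$ gives, at a critical radius where $h(r)=\tfrac{n-1}{r}$, $\cA''(r)=\big(-\tfrac{n-1}{r^2}-h'(r)\big)\Vol(\Sigma)$, i.e.\ $-(|\tilde{A}_\Sigma|^2+\tilde{\Ric}(\tilde{\nu},\tilde{\nu}))\psi^2-\bangle{\tilde{\nabla} h,\tilde{\nu}}\psi^2$. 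So the plus sign in the displayed statement is a slip that you have reproduced rather than derived; fix or flag it, noting that it is harmless for the paper's later argument, which only ever uses the bound $\tilde{d}h(\tilde{\nu})\ge-|\tilde{\nabla}_N h|$.
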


\subsection{Volume estimates of $\mu$-bubbles}
For manifolds satisfying a spectral Ricci curvature lower bound, Chodosh-Li-Minter-Stryker \cite{Chodosh-Li-Minter-Stryker-5bernstein} and Antonelli-Xu in \cite{Antonelli-Xu} established the following spectral Bishop-Gromov volume comparison theorem.
\begin{theo}[\cite{Chodosh-Li-Minter-Stryker-5bernstein, Antonelli-Xu}]\label{Antonelli-Xu}
 Let $\Sigma^n$, $n\geq 3$, be an $n$-dimensional simply connected compact smooth manifold, and let $0\leq \theta\leq \frac{n-1}{n-2}$, $\gamma>0$. Assume there is a positive function $\omega\in C^{\infty}(\Sigma)$ such that:
  $$\theta\Delta_{\Sigma}\omega\leq \lambda_{\Ric} \omega-(n-1)\gamma \omega,$$
where $\lambda_{\Ric}$ is the smallest eigenvalue of the  Ricci tensor on $\Sigma$, then there exists a sharp volume bound $$\Vol(\Sigma^{n})\leq\gamma^{-\frac{n}{2}}\Vol(\bS^{n}).$$
\end{theo}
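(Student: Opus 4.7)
The plan is to prove this via isoperimetric profile comparison, following Bray's approach in dimension three and its higher-dimensional, spectral-curvature extensions developed by Chodosh--Li--Minter--Stryker and Antonelli--Xu. Define the isoperimetric profile
$$I(v) := \inf\{\mathcal{H}^{n-1}(\partial^{*}\Omega) : \Omega\subset \Sigma,\ |\Omega|=v\}, \qquad v\in[0,\Vol(\Sigma)].$$
Standard geometric measure theory supplies a minimiser $\Omega_{v}$ whose reduced boundary is smooth outside a singular set of codimension at least $7$; this regularity is enough for the integration-by-parts identities used below.

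First I would test the second variation of perimeter on the smooth part of $\partial^{*}\Omega_{v}$ using a \emph{weighted}, volume-preserving normal variation $\psi\tilde{\nu}$ with $\psi = \omega^{\theta/2}\phi$, where $\phi$ is chosen to have zero mean on $\partial^{*}\Omega_{v}$. Expanding $\Delta_{\partial^{*}\Omega_{v}}(\omega^{\theta/2})$ produces a factor proportional to $\theta\Delta\omega/\omega$ (plus gradient remainders), which by the hypothesis
$$\theta\Delta\omega \leq \lambda_{\Ric}\omega - (n-1)\gamma\omega$$
can be absorbed against $\Ric(\tilde{\nu},\tilde{\nu})\geq\lambda_{\Ric}$: the $\lambda_{\Ric}$-contributions cancel and an effective Ricci lower bound of $(n-1)\gamma$ survives. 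The upper limit $\theta\leq(n-1)/(n-2)$ is precisely the threshold at which the residual gradient terms are controlled by the refined Kato-type inequality for constant-mean-curvature hypersurfaces.

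Next I would convert this pointwise inequality into a differential inequality for $I$, using the standard formalism (going back to Bavard--Pansu, Ritor\'e) that relates the second variation on an isoperimetric minimiser to derivatives of the profile. The outcome is an ODE for $I^{n/(n-1)}$ that is saturated by the profile of the round sphere $\bS^{n}_{\gamma}$ of radius $\gamma^{-1/2}$ (i.e.\ of constant Ricci curvature $(n-1)\gamma$). Since $I(0)=I(\Vol(\Sigma))=0$, ODE comparison forces $I\leq I_{\bS^{n}_{\gamma}}$ pointwise on $[0,\Vol(\Sigma)]$; evaluating at $v=\Vol(\Sigma)$ gives $\Vol(\Sigma)\leq \gamma^{-n/2}\Vol(\bS^{n})$. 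Simple connectedness enters here to rule out topologically degenerate configurations in the sphere comparison.

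The hard part is the weighted second-variation calculation in the first step: the exponent of $\omega$ and the admissible test function $\phi$ must be calibrated so that after integration by parts the surviving quadratic form matches the \emph{sharp} sphere ODE exactly, with no slack that would destroy equality in the limiting case $\Sigma = \bS^{n}_{\gamma}$. The range $0\leq\theta\leq (n-1)/(n-2)$ falls out of precisely this calibration together with the CMC refined Kato inequality, which is what makes the result sharp. A secondary technical issue is handling the singular set of $\Omega_{v}$ in dimensions $n\geq 8$; this is absorbed by a standard cutoff/approximation argument since the analysis only requires test functions supported in the regular part of $\partial^{*}\Omega_{v}$.
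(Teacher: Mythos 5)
This theorem is not proved in the paper at all: it is imported verbatim from Chodosh--Li--Minter--Stryker and Antonelli--Xu, and the paper only records (in the introduction) that those proofs run through the isoperimetric-profile method of Bray. So your overall plan --- profile comparison via the second variation of isoperimetric regions, with a test function built from a power of $\omega$ --- is the strategy of the cited sources, not a new route. The problem is that, as written, your sketch defers or misdescribes exactly the steps that constitute the theorem. First, the variational set-up is muddled: you take a \emph{volume-preserving} variation with $\psi=\omega^{\theta/2}\phi$, $\phi$ of zero mean. Volume-preserving (stability) variations only give $Q(\psi,\psi)\ge 0$ on mean-zero data and say nothing about $I''$; the Bavard--Pansu/Bray mechanism requires variations that \emph{change} the enclosed volume, producing the viscosity inequality
$$I''(v)\Bigl(\int_{\Sigma_v}\psi\Bigr)^{2}\;\le\;\int_{\Sigma_v}|\nabla_{\Sigma_v}\psi|^{2}-\bigl(|A|^{2}+\Ric(\tilde{\nu},\tilde{\nu})\bigr)\psi^{2}\,+\,(\text{first-order terms}),$$
with the natural choice $\psi=\omega^{\theta/2}$ itself (no mean-zero factor); inserting a mean-zero $\phi$ annihilates the very term $\bigl(\int\psi\bigr)^{2}I''(v)$ you need. (Also note $\int\omega^{\theta/2}\phi\neq 0$ in general even if $\int\phi=0$, so the variation you describe is not volume-preserving anyway.)

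Second, the claim that the admissible range $0\le\theta\le\frac{n-1}{n-2}$ comes from a ``refined Kato-type inequality for CMC hypersurfaces'' is not the actual mechanism and would not produce it: the threshold arises algebraically, from integrating the spectral hypothesis $\theta\Delta_{\Sigma}\omega\le(\lambda_{\Ric}-(n-1)\gamma)\omega$ against $\omega^{\theta-1}\psi^{2}$, converting $\Delta_{\Sigma}\omega$ into $\Delta_{\Sigma_v}\omega+\mathrm{Hess}\,\omega(\tilde{\nu},\tilde{\nu})+H\,\partial_{\tilde{\nu}}\omega$, and absorbing the resulting gradient and mean-curvature cross terms using $|A|^{2}\ge H^{2}/(n-1)$ and a completion of squares; this computation, which you explicitly postpone as ``the hard part,'' is the entire content of the sharp statement, so the proposal has a genuine gap there. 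Finally, the endgame is misstated: since $I(\Vol(\Sigma))=0$ by definition, ``evaluating $I\le I_{\bS^{n}_{\gamma}}$ at $v=\Vol(\Sigma)$'' yields nothing (and $I_{\bS^{n}_{\gamma}}$ is not even defined past $\gamma^{-n/2}\Vol(\bS^{n})$). The correct conclusion of the ODE comparison is that a positive profile satisfying the model differential inequality must vanish no later than the model profile does, which forces $\Vol(\Sigma)\le\gamma^{-n/2}\Vol(\bS^{n})$; simple connectedness is used to guarantee connectedness of the isoperimetric boundaries entering the comparison, not merely to ``rule out degenerate configurations.'' In short: right method, but the decisive weighted second-variation estimate is missing, its claimed source (Kato for CMC) is wrong, and the comparison step needs to be repaired.
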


In the following, we find good $\mu$-bubble in $(N, \tilde{g})$ and
apply the spectral Bishop-Gromov volume comparison theorem to get the volume estimate for the $\mu$-bubble.
\begin{prop}\label{diam-volume bound}
  Let $n=4, 5$ and $(N_{0},\tilde{g})$ be a compact subset of $(N^{n},\tilde{g})$ with connected boundary. Suppose there exists a point $p\in N_{0}$ such that $d_{\tilde{g}}(p,\partial N_{0})\geq 10\pi$. Let $w$ be a positive function on $(N_{0},\tilde{g})$ such that
\begin{align}\label{con-10}
  -\tilde{\Delta}_N w\geq(\tau_{n}-\eta_{n}\tilde{\lambda}_{\biRic_{\alpha}})w.
\end{align}
  Then there exists a connected relative open set $\Omega$ containing $\partial N_{0}$ such that $\Omega\subset B^{\tilde{g}}_{10\pi}(\partial N_{0})$ and
 $$\Vol_{\tilde{g}}(\Sigma)\leq\left(\frac{\tau_{n}}{2(n-2)\alpha_{n}\eta_{n}}\right)^{-\frac{n-1}{2}}\Vol(\bS^{n-1}),$$
  where $\Sigma$ is a connected component of $\partial\Omega\setminus\partial N_{0}$, $B^{\tilde{g}}_{10\pi}(\partial N_0)$ denotes the $10\pi$-neighborhood of $\partial N_0$ with respect to the metric $\tilde{g}$. Here $\alpha_{4}=1$, $\alpha_{5}=\frac{3}{4}$.
\end{prop}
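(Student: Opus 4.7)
The plan is to produce a warped $\mu$-bubble $\Sigma$ sitting inside a $10\pi$-tubular shell adjacent to $\partial N_0$, extract from its stability a spectral Ricci lower bound on $\Sigma$, and then invoke Theorem \ref{Antonelli-Xu} to convert that bound into the claimed volume estimate. Let $\rho := \dist_{\tilde g}(\cdot,\partial N_0)$, set $M_0 := \{\rho \leq 10\pi - \epsilon\} \subset N_0$ for small $\epsilon > 0$ with $\partial_+ M_0 = \partial N_0$ and $\partial_- M_0 = \{\rho = 10\pi - \epsilon\}$, and pick a smooth function $h : \mathring M_0 \to \bR$ with $h \to \pm\infty$ at $\partial_\pm M_0$ of essentially cotangent type, tuned to the scale $\gamma := \tau_n/(2(n-2)\alpha_n\eta_n)$ so that $-\langle\tilde\nabla h,\tilde\nu\rangle \geq h^2/(n-2) + (n-2)\gamma$ along any candidate surface. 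With $w$ the positive Jacobi solution of \eqref{JACfun} and the exponent $k$ matched to the bi-Ricci parameter $\alpha$ from Proposition \ref{low-bound} (so $k_4 = \alpha_4 = 1$, $k_5 = \alpha_5 = 3/4$), I would minimize $\cA_k(\Omega)$ over Caccioppoli sets containing $\partial_+ M_0 = \partial N_0$; existence and smoothness of the minimizer $\Omega \subset M_0$ in the dimensions $n=4,5$ follow from \cite{general soap bubbles, zhujintian2021}, while the blow-up of $h$ at $\partial M_0$ confines $\partial^*\Omega$ to $\mathring M_0$, hence $\Omega \subset B^{\tilde g}_{10\pi}(\partial N_0)$. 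Pick $\Sigma$ a connected component of $\partial^*\Omega \setminus \partial N_0$.

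The core step is the stability analysis. Substituting $\psi = w^{-k/2}u$ into the second variation formula for $\cA_k$ recalled just before the proposition, using the first variation identity $\tilde H_\Sigma = -kw^{-1}\langle\tilde\nabla w,\tilde\nu\rangle + h$ to eliminate $\tilde H_\Sigma\langle\tilde\nabla w,\tilde\nu\rangle$, integrating the $\tilde\Delta_\Sigma w$-term by parts, and invoking \eqref{JACfun} to replace $\tilde\Delta_{M_0}w$ by $-(\tau_n - \eta_n\tilde\lambda_{\biRic_\alpha})w$ converts stability into a weighted $L^2$-inequality on $\Sigma$ in $\tilde\lambda_{\biRic_\alpha}$, $|\tilde A_\Sigma|^2$, and derivatives of $u$. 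The ambient $\tilde\Ric_{M_0}(\tilde\nu,\tilde\nu)$ in the stability operator is then exchanged for $\widetilde{\biRic}_\alpha$ and $\tilde\Ric_\Sigma$ via the Gauss identity
\begin{align*}
\widetilde{\biRic}_\alpha(\tilde\nu,X) = \tilde\Ric_{M_0}(\tilde\nu,\tilde\nu) + \alpha\,\tilde\Ric_\Sigma(X,X) + \alpha\Bigl(\tilde A_\Sigma(X,X)^2 + \textstyle\sum_{e_i\in T\Sigma,\, e_i\perp X}\tilde A_\Sigma(X,e_i)^2 - \tilde A_\Sigma(X,X)\tilde H_\Sigma\Bigr),
\end{align*}
applied to a unit $X \in T\Sigma$ chosen so that $\widetilde{\biRic}_\alpha(\tilde\nu,X) = \tilde\lambda_{\biRic_\alpha}$. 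The leftover $A_\Sigma$-quadratics are absorbed by the $|\tilde A_\Sigma|^2\psi^2$-term from stability using Cauchy--Schwarz, which pins down the admissible coefficient $\theta \leq (n-2)/(n-3)$ in front of $\tilde\Delta_\Sigma$; the remaining $h^2$- and $\langle\tilde\nabla h,\tilde\nu\rangle$-contributions are absorbed by the choice of $h$ above. Taking $\omega \in C^\infty(\Sigma)$ to be a positive first eigenfunction of the resulting drifted Schr\"odinger operator yields the pointwise inequality
\begin{align*}
\theta\,\tilde\Delta_\Sigma\omega \leq \tilde\lambda_{\Ric_\Sigma}\,\omega - (n-2)\gamma\,\omega.
\end{align*}

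Since $M^n$ is simply connected with $n \geq 4$, so is $N^n = M^n \setminus \{0\}$, and a standard surgery argument as in \cite{Chodosh-Li-Minter-Stryker-5bernstein} permits $\Sigma$ to be taken simply connected; Theorem \ref{Antonelli-Xu} applied in ambient dimension $n-1$ with this $\omega$ and $\gamma$ then delivers $\Vol_{\tilde g}(\Sigma) \leq \gamma^{-(n-1)/2}\Vol(\bS^{n-1})$, the claimed bound. The main technical obstacle lies in the algebraic tuning in the stability analysis: the weight exponent $k$, the bi-Ricci parameter $\alpha$, and the shape of $h$ must be chosen simultaneously so that the $A_\Sigma$-quadratics from the Gauss identity, the $|\tilde\nabla w|^2$-quadratics arising from the substitution $\psi = w^{-k/2}u$, and the $h$-quadratics from the first variation all combine with compatible signs to yield precisely the constant $\gamma = \tau_n/(2(n-2)\alpha_n\eta_n)$ rather than something weaker. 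The $n = 5$ case is the tighter one: with $\alpha_5 = 3/4 \neq 1$ the bi-Ricci is asymmetric in its two arguments, and the absorption of the cross-term $\tilde A_\Sigma(X,X)\tilde H_\Sigma$ requires more careful Cauchy--Schwarz accounting than in $n = 4$, where $\alpha_4 = 1$ symmetrises the situation.
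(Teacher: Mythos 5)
Your skeleton is the paper's: build the shell $M_0$ adjacent to $\partial N_0$, minimize the warped functional $\cA_k$ with weight $w^k$ and prescribing function $h$ blowing up at the two boundary components, pass the spectral $\alpha$-bi-Ricci condition through the second variation and a Gauss-type identity to get a spectral Ricci inequality on $\Sigma$, and finish with Theorem \ref{Antonelli-Xu} at scale $\gamma=\tau_n/(2(n-2)\alpha_n\eta_n)$. But two of your committed choices would break the argument as written. First, the weight exponent: you take $k_4=\alpha_4=1$, $k_5=\alpha_5=3/4$, yet the mechanism you yourself describe --- trading the ambient $\tilde{\Ric}_{M_0}(\tilde{\nu},\tilde{\nu})$ in the stability operator against $\widetilde{\biRic}_{\alpha}(\tilde{\nu},X)$ coming from the weight term $k(\tau_n-\eta_n\tilde{\lambda}_{\biRic_{\alpha}})$ --- only closes if $k\eta_n=1$; otherwise a leftover $(1-k\eta_n)\tilde{\Ric}_{M_0}(\tilde{\nu},\tilde{\nu})$ of uncontrolled sign survives (nothing about the conformal metric $\tilde{g}$ gives a signed ambient Ricci). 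The paper accordingly chooses $k=1/\eta_n$ ($\approx 1.30$ for $n=4$, $\approx 1.12$ for $n=5$), not $k=\alpha_n$; with your $k$ (so $k\eta_n<1$) the potential cannot be bounded below and the argument stalls. Second, your condition on $h$, namely $-\langle\tilde{\nabla}h,\tilde{\nu}\rangle\ge h^2/(n-2)+(n-2)\gamma$, points the wrong way: the second variation contributes $+\tilde{d}h(\tilde{\nu})\psi^2$, so what must be arranged is a Riccati-type \emph{upper} bound $|\tilde{\nabla}h|<\beta_n h^2+\tau_n/(2\eta_n)$ (this is the paper's construction), not a lower bound on $-\partial_{\tilde{\nu}}h$, which would only make the bad term more negative.

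Third, the step you explicitly defer as ``the main technical obstacle'' is in fact the entire content of the proposition: after fixing $k=1/\eta_n$, the paper shows that the quadratic expression $L_n$ in $\tilde{A}^{\Sigma}_{11}$, $|\tilde{A}_{\Sigma}|$, $\tilde{H}_{\Sigma}$, $h$ and $\tilde{d}\ln w(\tilde{\nu})$ dominates $\beta_n h^2$ by checking positive definiteness of an explicit $3\times 3$ matrix $S_n$, with $\beta_4=1/2$ but $\beta_5=1/11$ (your implicit $\beta_5=1/(n-2)=1/3$ for $n=5$ is not justified and is likely too large), and then verifies the admissibility condition $\frac{4\eta_n}{(4\eta_n-1)\alpha_n}\le\frac{n-2}{n-3}$ before invoking Theorem \ref{Antonelli-Xu} --- exactly the threshold you anticipate, but it has to be checked with the correct $k$, $\alpha_n$, $\beta_n$. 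Two minor further points: one cannot in general pick $X\in T\Sigma$ with $\widetilde{\biRic}_{\alpha}(\tilde{\nu},X)=\tilde{\lambda}_{\biRic_{\alpha}}$, since the minimum is over arbitrary orthonormal pairs; only the inequality $\tilde{\lambda}_{\biRic_{\alpha}}\le\widetilde{\biRic}_{\alpha}(\tilde{\nu},X)$ is available, and it suffices. Also the hypothesis on $w$ is the differential inequality \eqref{con-10}, not the equation \eqref{JACfun}, though the direction you use is the right one.
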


\begin{proof}
According to the second variation formula, following the same computations from \cite[Section 4.2]{Mazet-6Bernstein}, and using the spectral $\alpha$-bi-Ricci condition (\ref{con-10}), we obtain
\begin{align*}
  \frac{4}{4-k}\int_{\Sigma}|\tilde{\nabla}_{\Sigma}\psi|^2
  &\geq\int_{\Sigma}\psi^2\bigg(k\tau_{n}-\alpha_{n}\tilde{\lambda}_{\Ric}+|\tilde{A}_{\Sigma}|^2+\alpha_{n} \tilde{H}_{\Sigma}\tilde{A}_{11}^{\Sigma}-\alpha_{n}\sum\limits_{i=1}^{n-1}(\tilde{A}_{1i}^{\Sigma})^2+k(\tilde{d}\ln w(\tilde{\nu}))^2\\
  &\quad+ k\tilde{H}_{\Sigma}\tilde{d}\ln w(\tilde{\nu})+\tilde{d}h(\tilde{\nu})\bigg).
\end{align*}
Choose $k=\frac{1}{\eta_{n}}$, and set
\begin{align*}
L_{n}&=
|\tilde{A}_{\Sigma}|^2+\alpha_{n} \tilde{H}_{\Sigma}\tilde{A}_{11}^{\Sigma}-\alpha_{n}\sum\limits_{i=1}^{n-1}(\tilde{A}_{1i}^{\Sigma})^2+\frac{1}{k}(\tilde{H}_{\Sigma}-h)^2+\tilde{H}_{\Sigma}(h-\tilde{H}_{\Sigma}).
\end{align*}
If we can show that
\begin{align}\label{Ln-estimate}
L_{n}>\beta_{n} h^2,
\end{align}
then
$$\frac{4}{4-k}\int_{\Sigma}|\tilde{\nabla}_{\Sigma}\psi|^2\geq\int_{\Sigma}\psi^2\left(\frac{\tau_{n}}{2\eta_{n}}-\alpha_{n}\tilde{\lambda}_{\Ric}\right)+\psi^2\left(\frac{\tau_{n}}{2\eta_{n}}+\beta_{n} h^2+\tilde{d}h(\tilde{\nu})\right).$$
Following the method of \cite{general soap bubbles,Chodosh-Li-Minter-Stryker-5bernstein,Hong-cmc nonexis,Mazet-6Bernstein}, we can construct a smooth function $h$ such that $\frac{\tau_{n}}{2\eta_{n}}+\beta_{n} h^2-|\tilde{\nabla}_{N}h|>0$ holds on $\Sigma$.
Therefore, there exists a positive Jacobi function $\omega$, such that $$\frac{4\eta_{n}}{(4\eta_{n}-1)\alpha_{n}}\tilde{\Delta}_{\Sigma}\omega\leq\left(\tilde{\lambda}_{\Ric}-\frac{\tau_{n}}{2\alpha_{n}\eta_{n}}\right)\omega.$$
To prove inequality (\ref{Ln-estimate}) holds, we argue as in \cite[Section 4.2]{Mazet-6Bernstein}, write $L_{n}$ as a quadratic form and denoted $S_{n}$ by the associated matrix. Then, it suffices to show
\begin{equation*}
  S_{n}=
  \begin{pmatrix}
    \frac{1}{n-1}+\frac{\alpha_{n}}{n-1}-\frac{\alpha_{n}}{(n-1)^2}+\frac{1}{k}-1 & \frac{\alpha_{n}}{2}\sqrt{\frac{n-2}{n-1}}(1-\frac{2}{n-1}) & \frac{1}{2}-\frac{1}{k} \\
    \frac{\alpha_{n}}{2}\sqrt{\frac{n-2}{n-1}}(1-\frac{2}{n-1}) & 1-\frac{n-2}{n-1}\alpha_{n} & 0\\
    \frac{1}{2}-\frac{1}{k} & 0 & \frac{1}{k}-\beta_{n}\\
  \end{pmatrix}
  >0.
\end{equation*}
We again divide the discussion into two cases.

\textbf{Case 1.}
For $n=4$, the matrix $S_{4}$ becomes
\begin{equation*}
  S_{4}=
  \begin{pmatrix}
    \frac{2\alpha_{4}}{9}+\frac{1}{k}-\frac{2}{3} & \frac{\alpha_{4}}{6}\sqrt{\frac{2}{3}} & \frac{1}{2}-\frac{1}{k} \\
    \frac{\alpha_{4}}{6}\sqrt{\frac{2}{3}} & 1-\frac{2}{3}\alpha_{4} & 0\\
    \frac{1}{2}-\frac{1}{k} & 0 & \frac{1}{k}-\beta_{4}\\
  \end{pmatrix}.
\end{equation*}
According to Proposition \ref{low-bound}, $\alpha_{4}=1$, $\tau_{4}\approx 0.285$, let $\frac{1}{k}=\eta_{4}\approx0.7675$. If we choose $\beta_{4}=\frac{1}{2}$, using software Mathematica, we find $S_{4}$ is positive definite, then $L_{4}>\frac{1}{2}h^2$.

 Since  $$\frac{4\eta_{4}}{(4\eta_{4}-1)\alpha_{4}}\approx1.4921\leq\frac{n-2}{n-3}=2,$$
we can use Theorem \ref{Antonelli-Xu} to conclude  that
$$\Vol_{\tilde{g}}(\Sigma^3)\leq (\frac{\tau_{4}}{4\alpha_{4}\eta_{4}})^{-\frac{3}{2}}\Vol(\bS^3)\leq 72\pi^2.$$

\textbf{Case 2.} For $n=5$, the matrix $S_{5}$ yields
\begin{equation*}
  S_{5}=
  \begin{pmatrix}
    \frac{3\alpha_{5}}{16}+\frac{1}{k}-\frac{3}{4} & \frac{\alpha_{5}}{4}\sqrt{\frac{3}{4}} & \frac{1}{2}-\frac{1}{k} \\
    \frac{\alpha_{5}}{4}\sqrt{\frac{3}{4}} & 1-\frac{3}{4}\alpha_{5} & 0\\
    \frac{1}{2}-\frac{1}{k} & 0 & \frac{1}{k}-\beta_{5}\\
  \end{pmatrix}.
\end{equation*}
Note that $\alpha_{5}=\frac{3}{4}$, $\tau_{5}\approx 0.71657$, choose $\frac{1}{k}=\eta_{5}\approx 0.8911$, and let $\beta_{5}=\frac{1}{11}$, we conclude that $S_{5}$ is positive definite, then $L_{5}>\frac{1}{11}h^2$.

Similarly, since $$\frac{4\eta_{5}}{(4\eta_{5}-1)\alpha_{5}}\approx 1.3627\leq \frac{n-2}{n-3}=\frac{3}{2},$$ we can use Theorem \ref{Antonelli-Xu} to conclude  that $$\Vol_{\tilde{g}}({\Sigma}^4)\leq(\frac{\tau_{5}}{6\alpha_{5}\eta_{5}})^{-2}\Vol(\bS^{4})\leq 84\pi^2.$$
\end{proof}

\subsection{Volume growth of the geodesic ball}
In this subsection, we will complete the proof of Theorem \ref{main-theo}. Since all preceding results are obtained under the conformal metric $\tilde{g}$, we now estimate the volume of $B_{R}(p)$.

\begin{proof}[Proof of Theorem \ref{main-theo}]
Up to a translation, we shall identify $p$ with the origin $0$, and let $\bar{r}(x)=\dis_{M,g}(x,0)$. For every fixed $R>0$, we consider the geodesic ball $B_{e^{10\pi}R}(0)$ with respect to the metric $g$. By Proposition \ref{one-end}, $M\setminus B_{e^{10\pi}R}(0)$ must have only one unbounded connected component $E$.  As discussed in \cite[Lemma 6.2]{Chodosh-Li-anisotropic}, we set $M^{\prime}=M\setminus E$, whose boundary $\partial M^{\prime}=\partial E$ must be connected.

Applying Proposition \ref{diam-volume bound} to $(M^{\prime}\setminus\{0\},\tilde{g})$, there exists a connected open set $\Omega$ in the $10\pi$-neighborhood of $\partial M^{\prime}$, such that each connected component of $\partial \Omega\setminus\partial M^{\prime}$ has volume bounded by $(\frac{\tau_{n}}{2(n-2)\alpha_{n}\eta_{n}})^{-\frac{n-1}{2}}\Vol(\bS^{n-1})$.

Let $M_{1}$ be the connected component of $M^{\prime}\setminus\Omega$ that contains $0$. Analogously, we can apply the same argument as in \cite{Chodosh-Li-anisotropic} to show that $M\setminus M_{1}$ is connected, and $M_{1}$ has only one boundary component denoted by $\Sigma=\partial M_{1}$. By the second item in \cite[Lemma 6.2]{Chodosh-Li-anisotropic}, we have $\min\limits_{x\in\Sigma}\bar{r}(x)\geq R$, so $B_{R}(0)\subset M_{1}$. On the other hand, by comparing the intrinsic distance $\bar{r}$ with the extrinsic distance $r$, we have $\max\limits_{x\in\Sigma}r(x)\leq e^{10\pi} R$, so
\begin{align*}\Vol_{g}(\Sigma)&=\int_{\Sigma}r^{n-1} d\tilde{\mu}^{n-1}\leq e^{10(n-1)\pi}R^{n-1}\Vol_{\tilde{g}}(\Sigma)\\&\leq e^{10(n-1)\pi}\left(\frac{\tau_{n}}{2(n-2)\alpha_{n}\eta_{n}}\right)^{-\frac{n-1}{2}}\Vol(\bS^{n-1})R^{n-1}.\end{align*}
Finally, Proposition \ref{iso-perimetric} implies that \begin{align*}\Vol_{g}(B_{R}(0))&\leq  \Vol_{g}(M_{1})\leq\frac{\|F\|_{C^{1}(\bS^{n})}}{n\min\limits_{\nu\in\bS^{n}}F(\nu)}e^{10\pi}R \Vol_{g}(\Sigma)\\&\leq \frac{e^{10n\pi}\left(\frac{\tau_{n}}{2(n-2)\alpha_{n}\eta_{n}}\right)^{-\frac{n-1}{2}}\Vol(\bS^{n-1})\|F\|_{C^{1}(\bS^{n})}}{n\min\limits_{\nu\in\bS^{n}}F(\nu)}R^n.\end{align*}
\begin{itemize}
    \item $n=4$,\quad\quad $C(F)=\frac{18\pi^2e^{40\pi}\|F\|_{C^{1}(\bS^{4})}}{\min\limits_{\nu\in\bS^{4}}F(\nu)}.$
    \item  $n=5$, \quad\quad$C(F)=\frac{84\pi^2e^{50\pi}\|F\|_{C^{1}(\bS^{5})}}{5\min\limits_{\nu\in\bS^{5}}F(\nu)}$.
\end{itemize}
This completes the proof.
\end{proof}

\bibliographystyle{plain}

\end{document}